\documentclass[11pt]{amsart}
\usepackage{amssymb,xy,color,doi}
\xyoption{all}
\usepackage{ytableau}
\usepackage{enumitem}
\usepackage{hyperref}
\hypersetup{
	colorlinks = true,
	linkcolor = blue,
	anchorcolor = blue,
	citecolor = blue,
	filecolor = blue,
	urlcolor = blue
}
\usepackage{amsmath}
\usepackage{MnSymbol}
\usepackage{tikz}
\usepackage{tkz-graph}
\tikzstyle{vertex}=[circle, draw, inner sep=0pt, minimum size=6pt]

\usepackage{array, multirow}
\usepackage[margin= 0.8 in]{geometry}
\usepackage{diagbox}
\usetikzlibrary{matrix}
\usetikzlibrary{arrows}
\usepackage[svgnames]{xcolor}
\usepackage{booktabs}

\newcommand{\cp}{\mathbb{CP}^2}

\newcommand{\Z}{\ensuremath{\mathbb{Z}}}
\newcommand{\D}{\ensuremath{\mathbb{D}}}
\newcommand{\N}{\ensuremath{\mathbb{N}}}

\newcommand{\R}{\ensuremath{\mathbb{R}}}
\newcommand{\C}{\ensuremath{\mathcal{C}}}

\newcommand{\A}{\ensuremath{\mathcal{A}}}

\newcommand{\Int}{\text{int }}

\newtheorem{theorem}{Theorem}[section]
\newtheorem{lemma}[theorem]{Lemma}
\newtheorem{proposition}[theorem]{Proposition}
\newtheorem{corollary}[theorem]{Corollary}

\newtheorem*{maintheorem}{Theorem 1.2}
\newtheorem*{maintheoremII}{Theorem 1.1}
\newtheorem*{corktheorem}{Involutory Cork Theorem}
\newtheorem*{relcorktheorem}{Relative Involutory Cork Theorem}

\theoremstyle{definition}
\newtheorem{definition}[theorem]{Definition}
\newtheorem{remark}[theorem]{Remark}

\newtheorem{question}[theorem]{Question}

\usepackage[style=alphabetic,maxbibnames=15]{biblatex}
\title{Exotic Surfaces in 4-manifolds and Surface Corks}

\author[Cindy (Suixin) Zhang]{Cindy (Suixin) Zhang}
\address[C., Zhang]{Department of Mathematics, University of California, One Shields
        Avenue, Davis, CA 95616-8633, U.S.A.}
\email{sczhang@ucdavis.edu}
\date{May 25, 2026}

\makeatletter

\begin{document}
\begin{abstract}
A fundamental result in 4-manifold topology asserts that any two exotic smooth structures on a simply-connected, closed 4-manifold differ by a cork twist: the operation of removing a compact, contractible, codimension-zero submanifold and regluing it by a diffeomorphism of its boundary. In this paper, we introduce the notion of a surface cork, an analogous object in the setting of smoothly embedded, closed surfaces $F$ in closed 4-manifolds $X$. This is a compact, contractible, codimension-zero submanifold intersecting $F$ in a controllable manner, whose removal and regluing via a diffeomorphism of its boundary changes the diffeomorphism type of $(X, F)$ as a pair while leaving its homeomorphism type unchanged. The way in which the surface $F$ interacts with the codimension-zero submanifold leads us to define three distinct notions of surface corks: enclosing surface corks, exterior surface corks, and transverse surface corks. We establish the existence of exterior surface corks for certain previously known examples of exotic pairs. Furthermore, we give the first explicit construction of a transverse surface cork for certain exotic families arising from Fintushel--Stern rim surgery. Notably, this transverse surface cork turns out to be diffeomorphic to a 4-ball.
\end{abstract}

\maketitle

\tableofcontents

\section{Introduction}\label{sec: intro}
Two smooth $4$-manifolds $X$ and $X'$ are called \textit{exotic} if they are
homeomorphic but not diffeomorphic. A key insight in the study of exotic 4-manifolds is how to relate them via corks. A \textit{cork} $(C, f)$ is a smooth, compact, contractible 4-manifold $C$ with a diffeomorphism $f$ on its boundary. Corks play a central role in the study of exotic smooth structures in dimension four, serving as local pieces whose removal and regluing along boundary diffeomorphisms can alter the smooth structure of the ambient manifold $X$.
This operation is known as a \textit{cork twist}, and the resulting manifold is denoted by $X(C, f)$. 

The first cork was discovered by Akbulut \cite{akbulut1991exotic}. Subsequent work of Curtis, Freedman, Hsiang, and Stong~\cite{key1374205m} and Matveyev~\cite{matveyev1996decomposition} showed that any exotic pair $X, X'$ of smooth, simply-connected, closed 4-manifolds are related by a cork twist where the boundary diffeomorphism is an involution.

\begin{corktheorem}[\cite{key1374205m,matveyev1996decomposition,kirby1996akbulut}] \label{corkthm}
    Let $X, X'$ be two closed, simply-connected exotic 4-manifolds. Then there exists a cork $(C, \tau)$ such that $X'= X(C, \tau )$ and $\tau^2 = \operatorname{id}.$
\end{corktheorem}

The \textit{order} of a cork is the minimal positive number whose power of its boundary diffeomorphism is a map that extends to a diffeomorphism of the full manifold. The above theorem shows that each exotic pair is related by a cork twist of order two.

In 2016, Tange constructed in \cite{2016arXiv160107589T} corks of arbitrary finite order, while, independently in \cite{AucklyKimMelvinRuberman2016}, Auckly, Kim, Melvin, and Ruberman also constructed finite order corks. In fact, more generally, the latter authors constructed $G$-corks for any finite subgroup $G$ of $SO(4)$---compact, contractible 4-manifolds with effective $G$-actions on the boundary, which embed in closed 4-manifolds so that twists corresponding to distinct elements of $G$ yield distinct smooth structures. The first example of an infinite order cork was found by Gompf in \cite{gompf2017infinite} in 2017. Tange then extended these constructions to $\mathbb{Z}^n$-corks in \cite{Tange2016} and further established constraints on families of manifolds related by $\mathbb{Z}$-corks in \cite{Tange2023Nonexistence}. Masuda provided the first examples of $G$-corks for infinite nonabelian groups $G$ in \cite{masuda2019}. Melvin and Schwartz generalized the Involutory Cork Theorem by proving that any finite collection of exotic smooth structures are related by twisting a single finite order cork \cite{MelvinSchwartz}. On the other hand, not every infinite family of smooth structures is related by twisting a single cork~\cite{Tange2023Nonexistence}. Also inspired by Gompf's work \cite{gompf2017infinite}, Dai, Mallick, and Zemke established a general Floer-theoretic condition that generalizes Gompf's construction and produces many new examples of corks in \cite{DaiMallickZemke2024}. Recently, a version of cork twisting was extended to compare exotic diffeomorphisms in \cite{KMPW}.

The goal of this paper is to introduce and explore an analogous object in a setting of smoothly embedded, closed surfaces in closed 4-manifolds. Although there is a significant body of work studying exotically embedded surfaces in 4-manifolds, and corks have also been shown to be useful in constructing examples of exotic surfaces (see, for instance, \cite{auckly2015stable, KMT24}), there is little in the current literature on directly relating them via some type of cork twisting.

Given a smooth, closed 4-manifold $X$ and two smoothly embedded, closed surfaces $F_1, F_2\subset X$, we call the pairs $(X, F_1)$ and $(X, F_2)$ an \textit{exotic pair}, if there is a pairwise homeomorphism $(X, F_1)\to (X, F_2)$ but no such pairwise diffeomorphism exists. Although in this work we exclusively consider exoticness in the sense that the surfaces are ambiently homeomorphic but not ambiently diffeomorphic, there has also been considerable interest in studying surfaces that are topologically isotopic but not smoothly isotopic. The relationship between these distinct notions of equivalence remains subtle and rich and merits further investigation.\footnote{See, for instance, \cite{Baraglia24, Schwartz19, auckly2023, KMT24}.}

Many ideas and methods for constructing and distinguishing exotic smooth structures of $4$-manifolds can also be used to modify embedded surfaces while keeping the ambient $4$-manifold fixed. First examples of exotic embeddings of orientable, homologically essential positive-genus surfaces with simply-connected complements and nonnegative self-intersections were produced via Fintushel--Stern rim surgery technique in \cite{fintushel1997surfaces}, which was later shown by Mark in \cite{TMark} to be effective for constructing exotic embeddings of surfaces with negative self-intersection as well. 
In the work of Finashin \cite{finashin2001knotting}, Kim \cite{kim2006modifying}, Kim--Ruberman \cite{KimRuber, KimRuberman2008SmoothSurfaces}, exotic embeddings of surfaces in simply-connected 4-manifolds, with the fundamental groups of the surface complements nontrivial, were constructed using variations of Fintushel--Stern rim surgery. Exotic embeddings of homologically essential 2-spheres \cite{Akbulut2014Isotoping2I, auckly2015stable, Torres} as well as exotic embeddings of nullhomologous 2-spheres and 2-tori in a wide range of 4-manifolds \cite{Hoffman2013NullhomologousES, Torres2020SmoothlyKA} are also known to exist through other approaches. We emphasize that this list is not exhaustive.\footnote{There is also substantial work in the literature involving non-orientable surfaces; see, for instance, \cite{FKV87, Fin09, LEV23, MORAU24, M23}.}

Given the abundance of examples, it is therefore natural to ask whether cutting and regluing by diffeomorphisms of the boundary of a compact, contractible submanifold of the ambient 4-manifold can also modify the smooth embedding of a surface without changing the ambient $4$-manifold. This possibility is suggested, for example, in \cite{auckly2015stable}, where the authors considered some cork $(C, \tau)$ with a boundary involution and showed that its blow-up $C^+=C\# \cp$ embeds in some simply-connected 4-manifolds $X$. In particular, naturally identifying $\partial C^+$ with $\partial C$ and considering the exceptional sphere $F\cong S^2\subset C^+\subset X$, they showed that $(X, F)$ and $((X-\text{int } C^+)\cup_\tau C^+, F)$ form an exotic pair. Although the simply-connected submanifold $C^+$ does not quite meet the contractibility criterion, it still displays behavior closely related to the phenomenon of our interest.

In this paper, we introduce the notion of a \textit{surface cork} for pairs $(X, F)$ consisting of a smooth, closed $4$-manifold $X$ and a smoothly embedded, closed surface $F\subset X$. According to how the surface $F$ interacts with the codimension-zero submanifold, we define three distinct notions of surface corks in Section~\ref{sec: Defs}: \textit{enclosing surface corks}, \textit{exterior surface corks}, and \textit{transverse surface corks}. At this stage, it is not yet clear which of these variants should be regarded as the most natural notion for the general theory; see Section~\ref{sec:further} for further discussion. In fact, a more immediate problem is to determine which of these variants admit examples. 

In this paper, we have the following result for exterior surface corks, which we obtain by applying a relative version of the Involutory Cork Theorem \cite{MelvinSchwartz, key1374205m, matveyev1996decomposition}.

\begin{theorem}\label{thm: extthm}
  There exists a smooth, simply-connected, closed $4$-manifold $X$ containing a smoothly embedded surface $F$ such that the pair $(X,F)$ admits an exterior surface $\mathbb{Z}_2$-cork.
\end{theorem}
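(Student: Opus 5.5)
We will realize the statement with a known exotic pair whose surface complements are simply connected, and then apply a relative Involutory Cork Theorem to the complements. Roughly, an exterior surface cork for $(X,F)$ is a compact contractible $C\subset X$ disjoint from $F$, with an involution $\tau$ of $\partial C$, such that $(X(C,\tau),F)$ is homeomorphic but not diffeomorphic to $(X,F)$ as a pair. The $\mathbb{Z}_2$ refers to $\tau^2=\operatorname{id}$, so twisting twice returns the original pair.

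\emph{Input pair.} Take the Fintushel--Stern rim surgery examples \cite{fintushel1997surfaces}. Let $X$ be a simply-connected elliptic surface such as $E(2)$. Let $F$ be a smoothly embedded surface (a fiber-type torus or a sphere/torus as in \cite{fintushel1997surfaces}) whose complement is simply connected. Rim surgery along a knot $K$ with nontrivial Alexander polynomial produces $F_K\subset X$. Then $(X,F)$ and $(X,F_K)$ are pairwise homeomorphic, which uses simply connected complements and Boyer/Freedman-type results. They are not pairwise diffeomorphic, which is detected by relative Seiberg--Witten invariants. Let $N$ be a tubular neighborhood of $F$, identified with one of $F_K$ by a diffeomorphism $\phi$ of normal disk bundles. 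Set $W=X\setminus \operatorname{int}N$ and $W_K=X\setminus \operatorname{int}N_K$. These are simply connected compact 4-manifolds with the same boundary.

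\emph{Relative cork theorem.} Arrange the pairwise homeomorphism $h\colon (X,F)\to(X,F_K)$ to be smooth near $F$, so that it restricts to $\phi$ on $N$. Its restriction is a homeomorphism $W\to W_K$ that is a diffeomorphism on $\partial W$. We then invoke the relative version of the Involutory Cork Theorem (the Curtis--Freedman--Hsiang--Stong/Matveyev argument, as in \cite{MelvinSchwartz}). This produces a contractible $C\subset \operatorname{int} W$ and an involution $\tau$ of $\partial C$ with $W_K\cong (W\setminus\operatorname{int} C)\cup_\tau C$ rel boundary. The proof follows the h-cobordism route:
\begin{itemize}
\item Homotopy equivalence rel boundary together with simple connectivity gives a smooth relative h-cobordism between $W$ and $W_K$, which is product near the boundary.
\item Handle trading reduces it to 2- and 3-handles.
\item The Matveyev/CFHS decomposition localizes the non-product part to a contractible submanifold whose double is a product, giving the involution.
\end{itemize}
The key point is that every step takes place away from $\partial W$, hence away from $N$.

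\emph{Conclusion.} Gluing $N$ back in shows that $X(C,\tau)\supset F$ is pairwise diffeomorphic to $(X,F_K)$. Therefore $C$, which is disjoint from $F$, is an exterior surface cork, and its order is $2$. It is not $1$, since the pair changed, and $\tau^2=\operatorname{id}$.

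\emph{Main obstacle.} The hard step is producing a smooth relative h-cobordism. We must show that the homeomorphism $W\to W_K$ being a diffeomorphism on the boundary yields a smooth h-cobordism rel boundary. This requires a homotopy equivalence rel $\partial$ that respects intersection forms. For simply connected manifolds with possibly non-rational-homology-sphere boundary, this uses Wall's/Kreck's results (relative Stong and CFHS). The boundary here is $F\times S^1$ or a circle bundle, which is not a homology sphere, so the relative surgery and stable diffeomorphism arguments must be checked carefully. We would first try to cite Kreck's modified surgery to get the smooth h-cobordism rel boundary directly, and then run Matveyev's argument verbatim in the interior.
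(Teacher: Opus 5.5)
You have a genuine gap, and it is exactly the step you flag as the ``main obstacle.''

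The relative involutory cork theorem you invoke (Melvin--Schwartz, Theorem~1.16) requires the two manifolds to have diffeomorphic \emph{homology-sphere} boundaries. The boundary of a tubular neighborhood of a closed orientable surface is a homology sphere only when the surface is a sphere of square $\pm1$. For your torus fiber, $\partial W\cong T^3$. The paper notes that the theorem fails in general without this hypothesis, and it poses the general primitively-embedded case, which contains your rim-surgery example, as an open question. So the cited tool does not apply.

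The replacement you sketch is also not an argument. ``Homotopy equivalence rel $\partial$ plus simple connectivity,'' or an isometry of intersection forms, does not by itself give a smooth h-cobordism rel $\partial$, because such a map need not be smoothly normally bordant to the identity. Saying you would ``try to cite Kreck'' leaves the one step that carries all the content unproved. Everything after it (the CFHS/Matveyev localization, gluing $N$ back, order $2$) is fine but conditional on that step.

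The paper avoids the issue entirely through its choice of example. It takes $F_1,F_2$ to be primitively embedded $2$-spheres of square $+1$, namely the exotic spheres of Auckly--Kim--Melvin--Ruberman in $p\mathbb{CP}^2\#q\overline{\mathbb{CP}^2}$. Then there is a diffeomorphism $\phi\colon\nu(F_1)\to\nu(F_2)$ with $\phi(F_1)=F_2$, and the complements have boundary $S^3$. Melvin--Schwartz therefore applies verbatim to $\phi|_{\partial}$. Gluing the resulting $\Phi\colon (X_1)_\tau\to X_2$ to $\phi$ gives $(X_\tau,F_1)\cong(X,F_2)$ with $C$ disjoint from $F_1$.

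If you want to keep the rim-surgery example, you must genuinely use that you have a homeomorphism $h\colon W\to W_K$ restricting to a diffeomorphism on $\partial W$. You can get this from Boyer's theorem as in Fintushel--Stern. Alternatively, isotope the pairwise homeomorphism to a smooth bundle map on $N$ and take $\phi$ to be that map, rather than fixing $\phi$ first.

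One way to finish from there:
\begin{enumerate}
\item The topological normal invariant of $h$ is trivial.
\item Hence its smooth normal invariant lies in the image of $[W/\partial W,\mathrm{TOP}/\mathrm{O}]\cong H^3(W,\partial W;\mathbb{Z}/2)\cong H_1(W;\mathbb{Z}/2)=0$, so $h$ is smoothly normally bordant rel $\partial$ to the identity.
\item Since $L_5(1)=0$, that normal bordism can be surgered to a smooth h-cobordism rel $\partial$.
\item The CFHS/Matveyev decomposition then applies in the interior.
\end{enumerate}
Until this step, or a precise reference for it, is supplied, your proposal does not prove the statement.
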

In fact, there are infinitely many such pairs $(X,F)$ with $F$ a smoothly embedded 2-sphere of self-intersection $+1$. Indeed, these examples are provided by \cite{auckly2015stable}, as described earlier. 

Furthermore, we establish the existence of a transverse surface cork for certain exotic families by giving an explicit construction. 

Many examples of exotic embeddings of oriented surfaces in simply-connected 4-manifolds arise from constructions based on Fintushel--Stern knot surgery, first introduced in \cite{fintushel1996knots} and used to construct infinite families of exotic smooth structures on 4-manifolds extensively. Some examples of such techniques are Fintushel--Stern rim surgery \cite{fintushel1997surfaces}, Kim’s twist rim surgery \cite{kim2006modifying}, and Finashin’s annulus rim surgery \cite{finashin2001knotting}, whereby a given surface is surgered to yield a new surface while the ambient 4-manifold remains fixed. In \cite{gompf2017infinite}, 
Gompf constructed the first example of an infinite order cork whose twists realize certain exotic families of 4-manifolds $\{X_{k}\}_{k\in\mathbb{Z}}$ of 
homeomorphism type $E(n)$, for each $n\geq1$, obtained via Fintushel--Stern 
knot surgery. Inspired by this construction, we construct in this paper an infinite order transverse surface cork relating pairs in certain exotic families $\{(X, F_k )\}_{k\in \Z}$ arising from Fintushel--Stern rim surgery, thereby establishing the following theorem.

\begin{theorem}\label{theorem 1}
There exists a simply-connected, closed $4$-manifold $X$ and a smoothly embedded, closed surface $F\subset X$ such that the pair $(X,F)$ admits an infinite order transverse surface $\Z$-cork, which intersects $F$ transversely in two annuli.

In fact, this transverse surface cork is diffeomorphic to a 4-ball. 
\end{theorem}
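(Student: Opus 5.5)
The plan is to mimic Gompf's infinite order cork construction \cite{gompf2017infinite} in the relative setting of Fintushel--Stern rim surgery. Take $X=E(n)$ (or another elliptic surface containing a cusp neighborhood) and let $F$ be a smoothly embedded surface, for instance a regular fiber torus or a section-type surface, which contains a rim torus $R$ with simply-connected complement and $[R]$ of nonzero Seiberg--Witten class in the relative sense of \cite{fintushel1997surfaces}. For a knot $K$, rim surgery replaces a neighborhood $S^1\times (B^3,B^1)$ of an arc of $F$ (times the rim circle) by $S^1\times (B^3, B^1_K)$, producing $F_K$. We take the twist-knot family $K_k$, $k\in\Z$ (as Gompf does with knot surgery), so that $\{(X,F_{K_k})\}$ is the exotic family. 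The Alexander polynomials $\Delta_{K_k}$ are pairwise distinct, so the relative Seiberg--Witten invariants of \cite{fintushel1997surfaces} (via the fiber sum $X_F\#_{F}$ with a suitable manifold, or the double branched cover argument) distinguish the pairs up to diffeomorphism, while the complements remain simply connected and so the pairs are pairwise homeomorphic by the standard Freedman--Boyer argument for rim surgery.

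The second step is to localize the difference. Twist knots $K_k$ differ from one another by twisting along an unknotted circle $c$ linking two strands of the clasp with linking number zero; hence $S^1\times(B^3,B^1_{K_k})$ is obtained from $S^1\times(B^3,B^1_{K_0})$ by $\pm1/k$ Dehn surgery along the torus $S^1\times c$, equivalently by a self-diffeomorphism supported near this torus. Following Gompf, instead of the torus we use a 4-ball: choose an arc in the $S^1$ direction and thicken $\{pt\}\times$ (a ball in $B^3$ containing the clasp region) to obtain a 4-ball $C\cong B^3\times I\subset X$ meeting $F$ transversely in the two strands times $I$, i.e. in two annuli (disks times interval, properly embedded). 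Gompf's observation is that the torus twist can be rewritten, using the $S^1$-direction, as cutting out such a ball and regluing by a boundary diffeomorphism $f$ of $S^3$ preserving the two boundary circles of the annuli (a "clasp twisting" map, isotopic to identity on $S^3$ but not rel the link of two annuli boundaries). I would verify in a Kirby-diagram picture that $f^k$ applied to $(C, C\cap F)$ turns $F_{K_0}$ into $F_{K_k}$, with $f$ extending over $C$ as a homeomorphism of pairs but not smoothly.

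The final step is bookkeeping: $C$ is a 4-ball, $C\cap F$ is two transverse annuli, regluing by $f^k$ fixes $X$ (since $f$ extends over $B^4$ as a diffeomorphism ignoring $F$) and yields $(X,F_k)$; distinctness of all $F_k$ from the first step shows no nonzero power of $f$ extends over the pair smoothly, giving infinite order. The main obstacle is the second step: showing the torus-supported twist really is realized by a boundary diffeomorphism of a ball meeting $F$ in exactly two annuli, which I would attack by drawing $S^1\times c$ and decomposing the $S^1$ factor into an interval plus a neighborhood, pushing the nontrivial twist into the interval piece as in Gompf's argument.
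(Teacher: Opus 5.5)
Your first and third steps are fine and match the paper: the exotic family comes from Fintushel--Stern. Once you have a ball $C$ with a twist $f$ satisfying $f|_{F\cap\partial C}=\mathrm{id}$ and $(X_{f^k},F)=(X,F_{K_k})$, the rest is bookkeeping. The gap is the second step, which is the whole construction, and the route you sketch does not work.

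\emph{Your ball meets $F$ in disks, and your twist moves $F$.} Your $C=I'\times B$ (an arc $I'$ of the rim circle times a ball $B$ around the clasp) meets $F=S^1\times B^1_{K_0}$ in $I'\times\tau_0$. Here $\tau_0=B\cap B^1_{K_0}$ is the trivial two-strand tangle. An arc times an interval is a disk, so $C\cap F$ is two disks, not two annuli. More seriously, the twist is $\mathrm{id}_{S^1}\times\psi^k$ on $S^1\times\partial B$, where $\psi$ is the full twist of $\partial B$ about a curve enclosing two endpoints of $\tau_0$. This twist runs around the entire rim circle. Suppose you split $S^1=I'\cup I''$ and absorb the twist over $I''\times B$ by an extension $\Psi_k$ of $\psi^k$ over $B$. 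The induced gluing map of $\partial(I'\times B)$ is then $\mathrm{id}\times\psi^k$ on $I'\times\partial B$, but $\Psi_k$ on the end faces $\partial I'\times B$. The map $\psi^k$ does not extend over the pair $(B,\tau_0)$, since otherwise $K_k$ would be isotopic to the unknot $K_0$. Hence $\Psi_k(\tau_0)\neq\tau_0$, so $f$ moves $F\cap\partial C$ and condition (III) fails. Gompf's argument has no step that pushes the twist into the interval. His cork is also not $B^3\times I$; a cork twist on a ball could never change a smooth structure.

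\emph{What the paper does instead.} Following Gompf, you must keep the entire circle factor inside the cork. The twist is supported on $N=I\times\partial\Sigma\times S^1$, where $\Sigma\subset E(K_0)$ is a punctured torus bounded by the twisting circle. Then $N\subset\partial Y$ for $Y=I\times\Sigma\times S^1\simeq(S^1\vee S^1)\times S^1$, and $Y$ is disjoint from $F$. You then kill the three $\pi_1$-generators and the two $H_2$-classes of $Y$ with disks in $X-\nu(T)$, and this is where rim surgery enters.
The generators $C_{\pm1}$ of $\Sigma$ are meridians of $K_0$. The gluing $\varphi$ identifies them with meridians of $F$, so they bound $0$-framed disks each hitting $F$ once.
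The circle factor is identified with $\alpha$, which bounds a $(-1)$-framed vanishing-cycle disk disjoint from $F$.

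Attaching these three 2-handles and deleting neighborhoods of the extended cores $D_{\pm1}$ yields a contractible $\mathcal{C}$ with $N\subset\partial\mathcal{C}$ and $N\cap F=\varnothing$. So $f$ (the twist on $N$, the identity elsewhere) fixes $F\cap\partial\mathcal{C}$ automatically. The two annuli are the cocores of $h_{\pm1}$ punctured by the deleted cores. Showing $\mathcal{C}\cong B^4$ is then a separate computation. The $0$-framed surgeries on the meridians of two Borromean components in $\Sigma\times S^1$ cancel, so each slice becomes $P\cong S^1\times D^2$, and $h_0$ is attached along its core circle. None of these ingredients appears in your proposal.

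Also, a section of $E(n)$ is a sphere, which has no non-separating curve along which to do rim surgery, so that choice of $F$ does not work.
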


More specifically, the ambient 4-manifold $X$ in this theorem may be chosen
to be $E(n)$ for any $n \geq 1$, and $F$ may be chosen to be a regular
torus fiber. In fact, more generally, a similar construction shows that, for all $m\in \N$, there exists some $(X,F)$ admitting a transverse surface $\Z^m$-cork  (cf. Remark~\ref{remark: 4.3}). 

The outline of this paper is as follows. We first define surface corks and establish the relevant terminology in Section~\ref{sec: Defs}. Then, after briefly recalling the rim surgery construction, we introduce in Section~\ref{sec: rim surgery and the family} the infinite exotic families of pairs $\{(X,F_k)\}_{k\in\mathbb{Z}}$ that will be the focus of our construction of transverse surface cork. In Section \ref{sec: construct transverse}, we explicitly construct an infinite order transverse surface $\Z$-cork in Proposition~\ref{prop: cork} and  Proposition~\ref{prop: intersection}. In Section~\ref{sec: C=B4}, we identify this transverse surface cork as a 4-ball in Theorem~\ref{thm: Ball} and show that it is ambiently extendible in Corollary~\ref{cor: strong}. In Section~\ref{sec: Link}, we obtain a diagram for the link of intersection between the surface and the boundary $S^3$ of the surface cork in $S^3$. Finally, in Section~\ref{sec:further}, we establish in Theorem~\ref{thm: ext cork} and Corollary~\ref{cor: ext cork exists} the existence of exterior surface corks for pairs satisfying suitable hypotheses and conclude with a discussion of the different notions of surface corks as well as several further questions.

\vspace{2mm}
\noindent\textbf{Acknowledgments.}
The author would like to thank Laura Starkston for her constant support and guidance throughout this project. The author would also like to thank Danny Ruberman for insightful conversations and for his generous feedback.

\section{Surface Corks and Related Notions} \label{sec: Defs}
In this section, we define surface corks and establish the relevant terminology used throughout the paper.

\bigskip
\noindent{\bfseries Surface Corks.} We now introduce surface corks, the main objects of this paper. They come in three types, distinguished by how the surface interacts with the contractible codimension-zero submanifold.

\begin{definition}[Surface Cork] \label{def: surface cork}
Let $X$ be a smooth, closed $4$-manifold with a smoothly embedded, closed surface $F\subset X$. Let $C$ be a smooth, compact, contractible $4$-manifold equipped with a boundary diffeomorphism $f:\partial C\to\partial C$, and let $j:C\hookrightarrow X$ be a smooth embedding.

We call the triple $(C,f,j)$ a \textbf{surface cork} for the pair $(X,F)$ if the following conditions hold.

\begin{enumerate}
    \item The submanifold $j(C)$ interacts with $F$ in one of the following
    three ways, which determine the type of surface cork:
    \begin{enumerate}
        \item[(I)] $F \subset \operatorname{int} j(C)$. In this case, $(C,f,j)$ is called an
        \textbf{enclosing surface cork}.

        \item[(II)] $j(C) \cap F = \varnothing$. In this case, $(C,f,j)$ is
        called an \textbf{exterior surface cork}.

        \item[(III)] $j(C)$ intersects $F$ transversely, and
        \[
        f\big|_{j^{-1}(j(\partial C)\cap F)}=\operatorname{id}.
        \]
        A surface cork that satisfies these conditions is called a
        \textbf{transverse surface cork}.
    \end{enumerate}

    \item Let
    \[
    X_f := (X-\operatorname{int} j(C)) \cup_f j(C). \footnote{By a mild abuse of notation, we also denote by $f$ the gluing map $j \circ f \circ j^{-1} : \partial j(C) \to \partial j(C)$. Moreover, although $j$ is suppressed from the notation, the resulting $(X_f, F)$ may depend on the embedding.}\]
    
    Then $X_f$ is diffeomorphic to $X$, but the pairs $(X,F)$ and $(X_f,F)$ form an exotic pair; that is, they are homeomorphic but not diffeomorphic as pairs.
\end{enumerate}

  The pair $(X_f,F)$ is then said to be obtained from $(X, F)$ by a \textbf{surface cork twist via $(C, f, j)$}. 
\end{definition}

The notation $(X_f,F)$ in Definition \ref{def: surface cork} is justified as follows. In each of the three cases above, the surface $F$ determines a natural corresponding surface in the reglued manifold $X_f$. More explicitly, in case (I), this surface is obtained from $F \subset j(C)$ inside the reglued copy of $C$.
In case (II), since $j(C)\cap F=\varnothing$, it is obtained from
$F \subset X-j(C)$ inside the complement. In case \textup{(III)}, the condition
\[
    f\big|_{j^{-1}(j(\partial C)\cap F)}=\operatorname{id}
\]
ensures that the surface
\[
    \bigl(F-\operatorname{int}(j(C)\cap F)\bigr)
    \cup_f
    \bigl(j(C)\cap F\bigr)
\]
is well-defined in $X_f$. Thus, by a slight abuse of notation, we denote the corresponding surface in
$X_f$ again by $F$.

Ideally, one would like the nontrivial intersection between the surface cork and the surface to be relatively simple. This motivates the following definition.

\begin{definition}
    A transverse surface cork $(C, f, j)$ for a pair $(X, F)$ is called a \textbf{planar surface cork} if $F\cap j(C)$ is a compact planar surface, i.e. each connected component of $F\cap j(C)$ is diffeomorphic to a sphere with finitely many disjoint open disks removed.

    If, furthermore, each connected component of $F\cap j(C)$ is an annulus, i.e. is diffeomorphic to $S^1\times[0,1]$, then $(C, f, j)$ is called an \textbf{annular surface cork} for the pair $(X, F)$. 
\end{definition}

\begin{remark}
There seems to be a natural trade-off, from a construction point of view, between the complexity of the intersection $F\cap j(C)$ and the complexity of the underlying 4-manifold $C$ of a transverse surface cork. The transverse surface cork we construct in Section~\ref{sec: construct transverse} turns out to be an annular surface cork, whose underlying $4$-manifold is as simple as possible: it is diffeomorphic to a $4$-ball.
\end{remark}

When the embedding $j: C\hookrightarrow X$ is clear from the context, or when $C$ is exhibited as a submanifold of the ambient 4-manifold $X$, we will suppress the embedding $j$ from the notation and simply denote the surface cork by $(C, f)$. 

\bigskip
\noindent{\bfseries Ambiently Extendible Surface Corks.} Since the ambient $4$-manifolds $X$ before the twist and $X_f$ after the twist are diffeomorphic, the boundary diffeomorphism $f$ of a surface cork $(C,f,j)$ may or may not extend to a diffeomorphism of $C$.

\begin{definition}[Ambiently Extendible Surface Cork] \label{def: extendible surface cork}
A surface cork $(C,f,j)$ is called \textbf{ambiently extendible} if the boundary diffeomorphism $f:\partial C\to \partial C$ extends over $C$ as a diffeomorphism.
\end{definition}

\begin{remark}\label{rmk: Extendible}
    Any exterior surface cork $(C, f)$ cannot possibly be ambiently extendible. Indeed, if there was some diffeomorphism $F: C\to C$ such that $F\mid_{\partial C}=f$, then the diffeomorphism $\operatorname{id}\cup F: X_f\to X$ would induce a pairwise diffeomorphism between $(X_f, F)$ and $(X, F)$. 
\end{remark}

\bigskip
\noindent{\bfseries Surface Cork Order.} We now define the order of a surface cork.
\begin{definition}[Surface Cork Order]\label{def: surface cork order}
Let $(C,f,j)$ be a surface cork for $(X,F)$. The \textbf{surface cork order} of $(C,f,j)$ is the smallest positive integer $m$ such that
\[
    (X,F)\cong (X_{f^m},F).
\]
If no such positive integer exists, we say that \((C,f,j)\) is an \textbf{infinite order surface cork}.
\end{definition}

\bigskip
\noindent{\bfseries Surface $G$-Corks.}
Instead of using a single boundary diffeomorphism, one may allow a group of boundary diffeomorphisms and ask that the corresponding surface cork twists have nontrivial effects on the pair.

\begin{definition}[Surface $G$-Cork]\label{def: surface G-cork}
Let $C$ be a smooth, compact, contractible $4$-manifold, and let $G$ be a nontrivial subgroup of $\operatorname{Diff}(\partial C)$. We call $(C,G)$ a \textbf{surface $G$-cork} for the pair $(X,F)$ of a smoothly embedded, closed surface $F$ in a smooth, closed 4-manifold $X$, if there exists a smooth embedding $j:C\hookrightarrow X$ such that
$(C,g,j)$ is a surface cork for $(X,F)$ for every nontrivial element $g\in G$.
\end{definition}

Thus, a surface $G$-cork consists of a contractible $4$-manifold together with a subgroup $G\subset\operatorname{Diff}(\partial C)$ of boundary diffeomorphisms, so that twisting by any nontrivial element of $G$ changes the smooth type of $(X, F)$ while preserving the ambient $4$-manifold $X$ up to diffeomorphism. The embedding $j$ is fixed in this definition; different elements of $G$ are applied to the same local piece $j(C)\subset X$. 

One may further ask that the surface cork twists corresponding to distinct elements of $G$ produce pairwise nondiffeomorphic results, which leads to the following definition.

\begin{definition}[$G$-Effective Surface Cork Embedding] \label{def: G-effective embedding}
Let \((C,G)\) be a surface \(G\)-cork for $(X, F)$ with embedding $j:C\hookrightarrow (X,F)$. We say that $j$ is \textbf{$G$-effective} if the pairs
\[
    (X_{g_1},F)
    \quad\text{and}\quad
    (X_{g_2},F)
\]
are not diffeomorphic as pairs whenever $g_1\neq g_2$ in $G$.
\end{definition}

In particular, a $G$-effective embedding produces a $G$-indexed family of pairwise nondiffeomorphic smooth pairs
\[
    \{(X_g,F)\}_{g\in G}.
\]
This is stronger than merely requiring each nontrivial \(g\in G\) to produce a smooth pair homeomorphic but nondiffeomorphic to the original one.

\section{Rim Surgery and Exotic Families} \label{sec: rim surgery and the family}
\subsection{Rim surgery} \label{sec: rim surgery}
Let $F$ be a smoothly embedded, oriented surface in a simply-connected, closed 4-manifold $X$. Take a non-separating curve $\alpha\subset F$. Then, choose a trivialization of the neighborhood $\nu(\alpha)=\nu(F)\mid_{\alpha}$ in $X$,\[\alpha\times I \times \D^2=\alpha\times B^3\to \nu(\alpha),\] where $\alpha\times I$ corresponds to $\nu(\alpha)$ in $F$. We construct a new surface from $F$ using the chosen curve and trivialization as follows.

In the chosen trivialization, let $\gamma$ be a pushed-in copy of the meridian circle $\{0\}\times \partial \D^2\subset I\times \D^2$, which is isotopic to a meridian of the surface $F$. Then, the torus $R_\alpha=\alpha\times \gamma\subset \alpha\times (B^3, I)$ is a torus with self-intersection $R_\alpha\cdot R_\alpha=0$ and is called a \textit{rim torus}.

Let $K\subset S^3$ be a knot, with its closed exterior denoted by $E(K)$. Let $\mu_K, \lambda_K$ denote a pair of meridian-longitude of $K$. 

Then, the \textit{Fintushel-Stern rim surgery} \cite{fintushel1997surfaces} on $(X, F)$ is defined by \[(X, F_K):=(X_K, F),\] where \[X_K:=\bigl(X-\nu(R_\alpha)\bigr)\cup_\varphi \bigl(S^1\times E(K)\bigr).\] Here, the gluing map $\varphi: \partial \nu(R_\alpha)\to S^1\times \partial E(K)$ is the diffeomorphism determined by 
\begin{equation}
\varphi_*(\alpha')=[S^1],\hspace{3mm}
\varphi_*(\gamma')=\mu_K, \hspace{3mm}\varphi_*(\mu_R)=\lambda_K,
\end{equation}
with respect to the bases $\{\alpha', \gamma', \mu_R\}$ for $H_1(\partial \nu(R_\alpha))$ and $\{[S^1], \mu_K, \lambda_K\}$ for $H_1(S^1\times \partial E(K))$, where $\alpha', \gamma'$ are pushoffs of $\alpha, \gamma$ into $\partial \nu(R_\alpha)$ and $\mu_R$ is a meridian of $R_\alpha$.

The notation $(X,F_K)$ should be understood as follows. Rim surgery is performed in the exterior $X-\nu(F)$ of the surface $F$. After the surgered exterior is glued back to the fixed neighborhood $\nu(F)$, the original surface determines a new
embedding of $F$ in $X_K$. Since there is a canonical identification $X_K\cong X$, we may use this identification to view this embedding as a surface $F_K$ in $X$.

\subsection{The exotic families $\{(E(n), F_k)\}_{k\in \Z}$} \label{sec: the family}
\begin{definition}
A smoothly embedded surface $F$ in a simply-connected, smooth $4$-manifold is called \textbf{primitively embedded} if $F$ is embedded with a simply-connected complement, i.e. $\pi_1\bigl(X- \nu(F)\bigr)\cong 1.$
\end{definition}
When rim surgery is applied to $(X,F)$ consisting of a primitively embedded surface $F$ in a simply-connected 4-manifold $X$, $(X, F)$ and $(X, F_K)$ are in fact homeomorphic as pairs; see \cite[Section~2.3]{fintushel1997surfaces} and \cite{Freedman82, SB86}.
The following theorem of Fintushel and Stern then provides a way to distinguish exotically embedded surfaces. 

\begin{theorem}[Theorem 1.1 in \cite{fintushel1997surfaces}, \cite{Sunukjian2013ANO}]\label{thm1.1}
Let $X$ be a simply-connected, symplectic 4-manifold and $F$ a symplectically and primitively embedded surface with positive genus and nonnegative self-intersection. 

If $K_1$ and $K_2$ are knots in $S^3$ and if there is a diffeomorphism of pairs $(X, F_{K_1})\to (X, F_{K_2})$, then $\Delta_{K_1}(t) = \Delta_{K_2}(t)$. Furthermore, if $\Delta_{K}(t) \neq 1$, then $F_K$ is not smoothly ambiently isotopic to a symplectic submanifold of $X$.
\end{theorem}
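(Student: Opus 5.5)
The plan is to convert the pairwise question into one about smooth closed $4$-manifolds, by taking a symplectic fiber sum along $F$, and then to distinguish the results with Seiberg--Witten invariants. The rim torus becomes a torus in the sum, and the knot surgery formula multiplies the invariant by the Alexander polynomial.

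\textbf{Step 1 (fiber sum).} Let $g$ be the genus of $F$ and $n=F\cdot F\geq 0$. Choose a simply-connected symplectic $4$-manifold $Y$ containing a symplectic surface $\Sigma$ of genus $g$ with $\Sigma\cdot\Sigma=-n$ and with $Y-\Sigma$ simply connected. For $g=1$ one can take an elliptic surface with a fiber, blown up $n$ times on that fiber. Form
\[
Z_K:=(X-\nu(F_K))\cup_\psi (Y-\nu(\Sigma)).
\]
A diffeomorphism of pairs $(X,F_{K_1})\to(X,F_{K_2})$ can be isotoped to respect tubular neighborhoods and restricts to a bundle isomorphism of normal circle bundles. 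After possibly composing $Y$ with a self-diffeomorphism of $(Y,\Sigma)$ realizing the induced map on $\partial\nu$, it extends to a diffeomorphism $Z_{K_1}\cong Z_{K_2}$. Making this extension step work for every surface diffeomorphism of $\Sigma$ (mapping class group realizability in $Y$) is a technical point. I would handle it by choosing $Y$ so that all mapping classes of $\Sigma$ extend, which is possible for standard examples via monodromy.

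\textbf{Step 2 (knot surgery formula).} Since rim surgery is performed in $X-\nu(F)$, we have $Z_K=(Z_U)_K$, the Fintushel--Stern knot surgery on the rim torus $R_\alpha\subset Z_U$, where $Z_U=X\#_{F=\Sigma}Y$ is a symplectic fiber sum. In $Z_U$ the rim torus becomes homologically essential: $\alpha$ is non-separating, so a curve in $Y$ closes up the dual class. It has square $0$, and its complement is simply connected. The knot surgery formula, in the generality of \cite{Sunukjian2013ANO} which removes the cusp-neighborhood hypothesis, gives
\[
SW_{Z_K}=SW_{Z_U}\cdot \Delta_K(t^2),\qquad t=\exp(2[R_\alpha]).
\]
This is the step I expect to be the main obstacle: one must verify the hypotheses of the gluing formula, in particular that $[R_\alpha]\neq 0$ and that $b^+>1$ after the sum.

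\textbf{Step 3 (comparison).} By Taubes, $SW_{Z_U}\neq 0$, and its canonical class has coefficient $\pm1$. A diffeomorphism $Z_{K_1}\to Z_{K_2}$ identifies the SW polynomials up to sign and the induced automorphism of $H_2$. The polynomials $SW_{Z_U}\Delta_{K_i}(t^2)$ have the same support structure along the rim-torus direction. Comparing the ``spread'' in the $[R_\alpha]$ direction and the coefficients, after normalizing by the symmetric Alexander normalization, yields $\Delta_{K_1}=\Delta_{K_2}$.

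For the final claim, suppose $F_K$ were isotopic to a symplectic surface. Then $Z_K$ would be a symplectic fiber sum, so its SW polynomial would have the Taubes form. In particular, the extremal basic classes would be $\pm K_{Z}$ with coefficient $\pm1$, and adjunction would be sharp on the fiber-sum surfaces. The product $SW_{Z_U}\Delta_K(t^2)$ with $\Delta_K\neq1$ produces basic classes outside the range allowed by the adjunction equality for the symplectic canonical class; equivalently, its number of extremal terms contradicts Taubes. This gives the contradiction.
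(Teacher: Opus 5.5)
The paper does not prove this statement; it quotes it from Fintushel--Stern, with Sunukjian's note supplying the knot surgery formula for an arbitrary essential square-zero torus. Your overall route is the standard one: sum along $F$ so that $R_\alpha$ becomes essential, use $SW_{Z_K}=SW_{Z_U}\cdot\Delta_K(t)$, then apply Taubes. (With $t=\exp(2[R_\alpha])$ the factor is $\Delta_K(t)$, not $\Delta_K(t^2)$.) However, the argument for the second claim fails as written, and Step 3 lacks the same key ingredient.

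\textbf{The non-isotopy claim.} Knowing only that $SW_{Z_K}$ has the Taubes form for \emph{some} symplectic structure gives no contradiction. Multiplying by $\Delta_K(t)$ moves classes only in the $[R_\alpha]$ direction, and $R_\alpha$ is disjoint from the fiber-sum surface, so no adjunction bound is violated. Moreover, for a fibered knot such as the trefoil ($\Delta_K=t-1+t^{-1}\neq 1$), the essential Lagrangian torus $R_\alpha$ can be made symplectic by perturbing $\omega$. Then $Z_K$ really is symplectic, and $SW_{Z_U}\Delta_K(t)$ really has a unique extremal class with coefficient $\pm1$.

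The missing idea is to use the specific form $\omega'$ on $Z_K$ obtained by summing along the hypothetical symplectic surface $F'\simeq F_K$. In the neck of a symplectic sum the rim tori are Lagrangian, so $[\omega']\cdot[R_\alpha]=0$. Hence every class on the line $K_{\omega'}+\mathbb{Z}[R_\alpha]$ has the same $\omega'$-pairing as $K_{\omega'}$. Taubes' theorem ($|\kappa\cdot[\omega']|\le K_{\omega'}\cdot[\omega']$, with equality only for $\kappa=\pm K_{\omega'}$) then leaves room for only one basic class on that line. But the restriction of $SW_{Z_U}\Delta_K(t)$ to that line is $P(t)\Delta_K(t)$ with $P\neq 0$, which has two distinct nonzero extreme terms because $\Delta_K$ is not a monomial.

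\textbf{Step 3.} The diffeomorphism $\Psi\colon Z_{K_1}\to Z_{K_2}$ acts on $H_2$ by an automorphism you do not control. It sends $[R_\alpha]$ to $\pm[R_{h_*\alpha}]$ with $h=f|_F$ arbitrary, and it need not fix $SW_{Z_U}$. So the identity $\Psi_*(SW_{Z_U})\,\Delta_{K_1}(\Psi_*t)=\pm SW_{Z_U}\,\Delta_{K_2}(t)$ gives no common direction along which to compare spreads. If $\Psi_*$ were the identity you could simply cancel $SW_{Z_U}\neq0$ in the domain $\mathbb{Z}[H_2]$; the whole issue is that it is not.

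What $\Psi$ does preserve is the rim-torus subgroup $\mathcal{R}\cong H_1(F)$. The same Lagrangian-plus-Taubes input shows that $SW_{Z_U}$ restricted to the coset $K_{Z_U}+\mathcal{R}$ is a single monomial. Write $t_a=\exp(2[R_a])$. Then the gcd, in the UFD $\mathbb{Z}[\mathcal{R}]$, of the restrictions of $SW_{Z_K}$ to the cosets of $\mathcal{R}$ is $\Delta_K(t_\alpha)$ up to a unit. This gcd is respected by $\Psi_*$, which gives $\Delta_{K_1}(t_{h_*\alpha})\doteq\Delta_{K_2}(t_\alpha)$ in a Laurent ring of rank $2g$. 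That forces either both polynomials to equal $1$, or $h_*\alpha=\pm\alpha$ and $\Delta_{K_1}=\Delta_{K_2}$ (by symmetry and $\Delta(1)=1$).

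\textbf{Step 1.} This also removes your Step 1 concern, which as posed is incomplete anyway. The boundary map carries not just a mapping class of $\Sigma$ but also an $H^1(F;\mathbb{Z})$ worth of fiber rotations, which your monodromy argument does not realize. Instead of extending over $Y$, let the $Y$-side gluing change. The new sum is still, after isotopy, a symplectic fiber sum with Lagrangian rim tori, and that is all the argument uses.
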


The contrapositive of this theorem allows one to use rim surgery as a systematic way of producing infinitely many topologically equivalent but smoothly distinct surfaces in the same 4-manifold, whose diffeomorphism types are distinguishable by the Alexander polynomials of knots. In particular, the exotic families with which we are concerned in this work are described in Corollary \ref{cor: exotic family} below. 

\begin{corollary}\label{cor: exotic family}
Let $F$ be a regular torus fiber of $E(n)$, $n\geq 1$. Let $\{K_k=\kappa(k, -1)\}_{k\in \Z}$ be the family of double twist knots described in Figure \ref{fig: double-twist knots}. Fix a choice of non-separating curve $\alpha\subset F$, a trivialization of the neighborhood $\nu(\alpha)=\nu(F)\mid_{\alpha}=\alpha\times I\times \D^2$, and a pushed-in copy $\gamma$ of the meridian circle $\{0\}\times \partial \D^2\subset I\times \D^2$, as in Section \ref{sec: rim surgery}. Let $(E(n),F_k)$ be the result of rim surgery via the knot $K_k$ on the pair $(E(n), F)$ using the rim torus $R_\alpha=\alpha \times \gamma$.

Then, $\{(E(n), F_k)\}_{k\in \Z}$ form an exotic family of pairs of homeomorphism type $(E(n), F)$. That is, for any $k_1\neq k_2,$ the pairs \[
    (E(n), F_{k_1})
    \quad\text{and}\quad
    (E(n), F_{k_2})
\]
are pariwise homeomorphic but non-diffeomorphic.  

Furthermore, $F_k$ is not smoothly isotopic to a symplectic submanifold of $E(n)$ for all $k\neq 0$. 
\end{corollary}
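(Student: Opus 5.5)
The plan is to verify the hypotheses of Theorem~\ref{thm1.1}, compute the Alexander polynomials of the knots $K_k=\kappa(k,-1)$, and check that these polynomials are pairwise distinct. Together with the topological statement recalled in Section~\ref{sec: the family}, this will give both claims of the corollary.

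\emph{Hypotheses.} $E(n)$ is simply connected and carries a symplectic structure, namely a Kähler elliptic surface structure. The regular fiber $F$ is a symplectic torus, so it has positive genus, and $F\cdot F=0$. Primitivity follows from the presence of a section: $E(n)$ has a section of square $-n$ meeting $F$ once. A meridian of $F$ therefore bounds a punctured section disk in $E(n)-\nu(F)$. Since $E(n)-\nu(F)$ is the complement of a fiber of an elliptic fibration with a cusp fiber, it is simply connected. Concretely, its fundamental group is generated by the images of $\pi_1(F)$ and the meridian; the vanishing cycles kill $\pi_1(F)$, and the section kills the meridian. As recalled before Theorem~\ref{thm1.1} (via \cite{fintushel1997surfaces}, \cite{Freedman82}, \cite{SB86}), rim surgery on a primitively embedded surface preserves the pairwise homeomorphism type. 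Hence every $(E(n),F_k)$ is pairwise homeomorphic to $(E(n),F)$, and so any two of them are pairwise homeomorphic.

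\emph{Alexander polynomials.} By Theorem~\ref{thm1.1}, a pairwise diffeomorphism $(E(n),F_{k_1})\to(E(n),F_{k_2})$ forces $\Delta_{K_{k_1}}=\Delta_{K_{k_2}}$. It therefore suffices to show that $k\mapsto\Delta_{\kappa(k,-1)}(t)$ is injective on $\Z$. I would compute these polynomials from a genus-one Seifert surface for the double twist knot, read off from Figure~\ref{fig: double-twist knots}. The Seifert matrix has the form $\begin{pmatrix} a & 1\\ 0 & b\end{pmatrix}$, with $a,b$ determined by the two twist parameters: $a$ depends linearly on $k$, and $b$ comes from the $-1$ box. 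This yields
\[
\Delta_{K_k}(t)= c_k t - (2c_k+1) + c_k t^{-1},
\]
with $c_k=\pm k$ up to the fixed normalization. This is the twist-knot family: $K_0$ is the unknot, and $K_k$ has Alexander polynomial $kt-(2k+1)+kt^{-1}$ under a suitable sign convention. The middle coefficient $2c_k+1$ then determines $k$, so the map $k\mapsto\Delta_{K_k}$ is injective. The main (though minor) obstacle is pinning down the crossing and orientation conventions of Figure~\ref{fig: double-twist knots}, so that $c_k$ is an injective affine function of $k$ rather than, say, depending only on $|k|$. Since $\Delta(t)=\Delta(t^{-1})$ and the polynomial is determined up to sign by the determinant-type coefficient, I would double-check using the determinant $|\Delta(-1)|=|4c_k+1|$. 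This quantity is injective in $c_k\in\Z$, because $4c+1=-(4c'+1)$ has no integer solutions.

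\emph{Non-symplecticity.} For $k\neq 0$ we have $c_k\neq0$, so $\Delta_{K_k}\neq 1$. The second statement of Theorem~\ref{thm1.1} then shows that $F_k$ is not smoothly isotopic to a symplectic submanifold of $E(n)$.
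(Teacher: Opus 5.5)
Your proposal is correct and follows the same route as the paper. Both arguments check that $(E(n),F)$ satisfies the hypotheses of Theorem~\ref{thm1.1}, use the rim-surgery homeomorphism for primitively embedded surfaces, and conclude from the pairwise distinctness of the Alexander polynomials together with $\Delta_{K_k}\neq 1$ for $k\neq 0$. The only difference is that you fill in details the paper handles by citation or assertion: your section/vanishing-cycle argument for $\pi_1(E(n)-\nu(F))=1$ replaces the paper's citation of Gompf--Stipsicz, and your explicit formula $\Delta_{K_k}(t)=kt-(2k+1)+kt^{-1}$ (up to units), with determinant $|4k+1|$, replaces the paper's bare claim that the polynomials are distinct. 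The full-twist convention you need for $c_k=\pm k$ is confirmed by the paper's $(-1/k)$-surgery description of $K_k$.
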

\begin{figure}[h]
    \centering
    \includegraphics[scale=0.5]{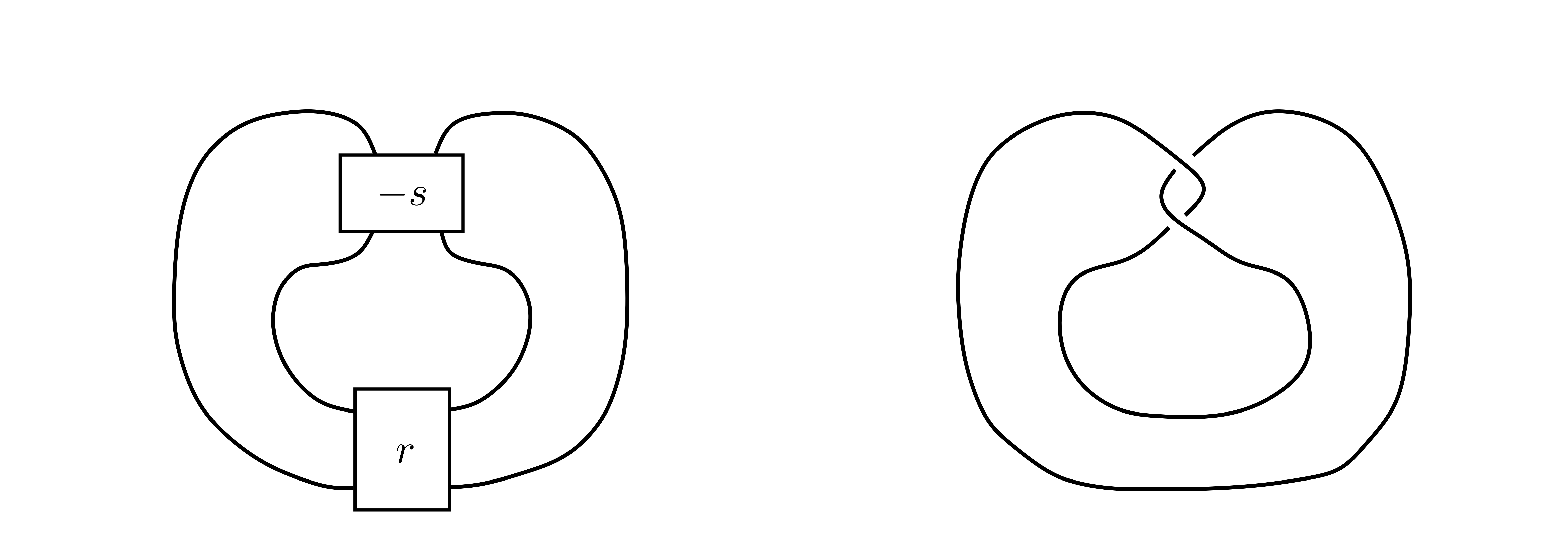}
    \caption{The double twist knots $\kappa(r, -s)$ and the unknot $K_0=\kappa(0,-1)$.}
    \label{fig: double-twist knots}
\end{figure}

\begin{proof}
$E(n)$ is a simply-connected, symplectic 4-manifold. A regular torus fiber $F$ is symplectically embedded in $E(n)$. Furthermore, we have that $F\cdot F=0\geq 0$, and that $\pi_1(E(n)-\nu(F))\cong 1$ (see, e.g., \cite[p.~72]{gompf20234}). Therefore, $F$ is a symplectically and primitively embedded surface with positive genus and nonnegative self-intersection. Thus, the pair $(E(n), F)$ satisfies the hypotheses of Theorem~\ref{thm1.1}. 

The family of twist knots $\{K_k\}_{k\in \Z}$ has the property that their Alexander polynomials are pairwise distinct. Thus, if we apply the rim surgery using this family of twist knots $\{K_k\}_{k\in \Z}$ to the pair $(E(n), F)$, we obtain an exotic family $\{(E(n), F_k)\}_{k\in \Z}$ by Theorem \ref{thm1.1}, where $F_k$ is not smoothly isotopic to a symplectic submanifold of $E(n)$ for all $k\neq 0$. 

Since $K_0$ is an unknot, the resulting pair $(E(n), F_{0})$ is actually diffeomorphic to $(E(n), F)$ (cf. \cite{fintushel1997surfaces}). 
\end{proof}

\section{Constructing a Transverse Surface Cork} \label{sec: construct transverse}

\subsection{A handle structure of $E(n)$} \label{sec: 3.1}
Recall that $E(n)$ has a standard description in which it is built from $S^1\times S^1\times \D^2$ (a neighborhood of a regular fiber $F=S^1\times S^1$) by adding handles corresponding to vanishing cycles of a Lefschetz fibration together with handles that close up the fibration. In particular, each of the two circle factors has $6n$-parallel copies (vanishing cycles) to which 2-handles are attached with framing $-1$ relative to the product framing of the boundary $T^3$ (cf. \cite[Section 8.2]{gompf20234}).

This handle structure will be used later in our proof of Theorem \ref{theorem 1}.

\subsection{Theorem~\ref{theorem 1} in precise form} \label{sec: main}
We now restate Theorem \ref{theorem 1} from Section \ref{sec: intro} in a more precise form.
\begin{maintheorem}[Precise restatement] \label{thm: transverse restate}
Let $X=E(n)_{K_0}$ with $n\geq 1$ and $F\subset E(n)$ a regular torus fiber as in Corollary~\ref{cor: exotic family}. Then, the pair $(X, F)$ admits an ambiently extendible, infinite order transverse surface $\Z$-cork $(\C, f)$\footnote{Since $\C$ is exhibited as a codimension-zero submanifold of $X$, the embedding $j:\C\hookrightarrow X$ is understood and suppressed from the notation.} with $\C\subset X$. Moreover, $(\C, f)$ is an annular surface cork intersecting $F$ transversely in two annuli, and its iterated twists realize the exotic family described in Corollary~\ref{cor: exotic family}; that is, \[\{(X_{f^k}, F)\}_{k\in \Z}=\{(E(n), F_k)\}_{k\in \Z}. \footnote{Indeed, when $k=0$, we have $(X_{f^0},F)=(X,F)=(E(n),F_0)$; it turns out that $(X_{f^k}, F)=(E(n), F_k)$ for all $k\in \Z$.}\]
In fact, $\C$ is diffeomorphic to a 4-ball. 
\end{maintheorem}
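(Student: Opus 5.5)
The plan is to model the construction on Gompf's infinite order cork for knot surgery, adapted to the rim surgery description of $F_k$. The key observation is that the double twist knots $K_k=\kappa(k,-1)$ differ from one another only inside a single twist box. There is a $3$-ball $B_0\subset S^3$ meeting $K_k$ in two unknotted, untangled arcs, and $K_{k+1}$ is obtained from $K_k$ by one full twist $\tau$ of $B_0$ supported away from $\partial B_0$. I will use the local model of rim surgery: inside $\nu(\alpha)=\alpha\times B^3$, the surface $F_K$ is $\alpha\times a_K$, where $a_K\subset B^3$ is a properly embedded arc tying the knot $K$. This model is equivalent to the $S^1\times E(K)$ description of Section~\ref{sec: rim surgery}. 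Since $K_0$ is the unknot, $X=E(n)_{K_0}\cong E(n)$, and $F=F_0$ meets $\alpha\times B^3$ in $\alpha\times a_{K_0}$. Choose $B_0\subset B^3$ to be the twist box of $a_{K_0}$, meeting it in two arcs $b_1,b_2$.

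First I build $\mathcal C$. The region $\alpha\times B_0\cong S^1\times B^3$ is not contractible, so, as in Gompf's argument, I cancel its $1$-handle. By the handle structure of Section~\ref{sec: 3.1}, a parallel copy of $\alpha$ in $\partial\nu(F)$ bounds the core of a $(-1)$-framed $2$-handle coming from a vanishing cycle. After isotoping this copy to $\alpha\times\{p\}$ with $p\in\partial B_0$ away from $b_1,b_2$, I set $\mathcal C=(\alpha\times B_0)\cup h$, smoothed, where $h$ is a thickening of that $2$-handle. The attaching circle of $h$ meets the belt sphere of the $1$-handle once, so $\mathcal C\cong B^4$; this gives the last claim of Theorem 1.2. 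Since $h$ can be chosen disjoint from $F$, we get $F\cap\mathcal C=\alpha\times(b_1\sqcup b_2)$. This is two annuli, and the intersection is transverse along $\alpha\times\partial b_i$, which gives Proposition~\ref{prop: intersection}.

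Next I define $f$. Let $\tau$ be a full twist of $B_0$ rel the arcs $b_i$, realized as a diffeomorphism of $B_0$ that is the identity near $b_1\cup b_2$ and near $p$ but not on all of $\partial B_0$. Then $\mathrm{id}_\alpha\times\tau$ restricts to a diffeomorphism of $\partial(\alpha\times B_0)$ that is the identity near the attaching region of $h$. Extending it by the identity over the rest of $\partial\mathcal C$ gives $f\colon\partial\mathcal C\to\partial\mathcal C$. The map $f$ fixes $F\cap\partial\mathcal C$ pointwise, as condition (III) requires. Regluing by $f^k$ changes the arcs inside $\alpha\times B_0$ by $k$ full twists, so the new surface is $\alpha\times a_{K_k}$. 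The step I expect to be the main obstacle is justifying this: the twist happens on $\partial(\alpha\times B_0)$ rather than inside it, and I must check that $(X_{f^k},F)\cong(E(n),F_k)$ as pairs while $X_{f^k}\cong X$. I would first try the standard trick of pushing the interior twist to a collar of $\partial B_0$. I would then check that the resulting regluing of $h$ is isotopic to the original, using the fact that $\tau$ restricted to $\partial B_0$ is isotopic to the identity in the complement of the arc endpoints and $p$. This is precisely where Gompf's use of the $-1$ vanishing cycle enters.

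Once the identification $\{(X_{f^k},F)\}_{k\in\Z}=\{(E(n),F_k)\}_{k\in\Z}$ holds, the remaining conclusions are quick. Corollary~\ref{cor: exotic family} gives that every nontrivial twist produces a pair homeomorphic but not diffeomorphic to $(X,F)$, and that distinct $k$ give distinct pairs. Hence $(\mathcal C,\langle f\rangle)$ is an infinite order, $\Z$-effective transverse surface $\Z$-cork. It is annular by construction. It is ambiently extendible because $\partial\mathcal C\cong S^3$, $f$ preserves orientation, and every orientation-preserving diffeomorphism of $S^3$ is isotopic to the identity by Cerf's theorem, so $f$ extends over $\mathcal C\cong B^4$.
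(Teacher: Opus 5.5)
Your construction works and is genuinely different from the paper's. However, the paragraph on the ``main obstacle'' is wrong in two places, and the step it worries about is actually immediate.

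First, $\tau$ cannot be the identity near $b_1\cup b_2$. If it were, the extension $\Phi$ below would fix $F\cap\mathcal{C}$, and every twist would return $(X,F)$. Instead, take $B_0=D^2\times[0,1]$ with $b_j=\{x_j\}\times[0,1]$ and $p$ on the top disk. Set $\tau(z,t)=(e^{2\pi i\phi(t)}z,t)$, with $\phi\equiv 0$ near $0$ and $\phi\equiv 1$ near $1$. This map is the identity near the four endpoints and near $p$, and it twists the arcs.

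Second, $\tau|_{\partial B_0}$ is \emph{not} isotopic to the identity rel the endpoints and $p$. It is the Dehn twist about the equator $\partial D^2\times\{1/2\}$, which splits the five marked points three and two. That curve is essential in the punctured sphere, and this nontriviality is the whole point of the twist.

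What you actually need is simpler. Because $\tau$ is a diffeomorphism of all of $B_0$ that is the identity near $p$, the map $\Phi=(\mathrm{id}_\alpha\times\tau)\cup\mathrm{id}_h$ is a diffeomorphism of $\mathcal{C}$ with $\Phi|_{\partial\mathcal{C}}=f$. Hence $\mathrm{id}\cup\Phi^k\colon X_{f^k}\to X$ is a diffeomorphism taking $F$ to $(F-\alpha\times(b_1\sqcup b_2))\cup\alpha\times\tau^k(b_1\sqcup b_2)$. That surface is the knotted-arc model of $F_k$, once you choose the sign of $\tau$ to match $\kappa(k,-1)$. This is exactly the argument of Lemma~\ref{lem: Sk in X}. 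No collar pushing, no re-isotopy of $h$, and no framing of the vanishing cycle is involved. The same $\Phi$ also exhibits ambient extendibility directly, without appeal to Cerf.

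With that repair, the two routes compare as follows. The paper follows Gompf on the exterior side of rim surgery:
\begin{itemize}
\item it starts from $Y=(I\times\Sigma)\times S^1\subset E(K_0)\times S^1$, where $\Sigma$ is a punctured torus bounded by the twisting circle;
\item it kills $\pi_1$ with $h_{\pm1}$ (attached along meridians of $K_0$, with cores crossing $F$ once) and with $h_0$ (a vanishing cycle);
\item it kills $H_2$ by drilling out the extended cores $D_{\pm1}$;
\item it then needs Borromean-rings Kirby calculus to recognize $B^4$, and a separate diagrammatic computation for the link $F\cap\partial\mathcal{C}$.
\end{itemize}
The twisting map is of the same kind in both: a Dehn twist on an annulus encircling the two strands of the $r$-box, crossed with the $\alpha$-circle.

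Your knotted-arc model gives three things more cheaply: $\mathcal{C}\cong B^4$ by an evident $1$-/$2$-handle cancellation (for any framing of $h$), an explicit extension of $f$, and an obvious intersection link (four parallel copies of the attaching circle of $h$). In exchange, the paper's intersection is more local. Its annuli are punctured cocores of $h_{\pm1}$, so they lie in two small disks of $F$, and each $F_k$ differs from $F$ only inside two disks. Your annuli $\alpha\times b_j$ instead have cores isotopic to $\alpha$, which are essential in $F$.
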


Theorem~\hyperref[thm: transverse restate]{1.2} follows from Proposition~\ref{prop: cork}, Proposition~\ref{prop: intersection}, and Corollary~\ref{cor: strong} below.

\subsection{Constructing $\C$}

Unless otherwise stated, until the end of Section~\ref{sec: Link}, we let $X=E(n)_{K_0}$ with $n\geq 1$, let $F\subset E(n)$ be a regular torus fiber,  and let $\{(E(n),F_k)\}_{k\in \Z}$ be the exotic family as in Corollary~\ref{cor: exotic family}.

We now construct explicitly a compact, contractible submanifold $\C$ of $X$, which turns out to be a transverse surface cork for $(X, F)$ as desired. 

\begin{proposition}\label{prop: cork}
There exists a smooth, compact, contractible, codimension-zero submanifold $\C\subset X$ with a boundary diffeomorphism $f: \partial \C\to \partial \C$, such that $(\C, f)$ is an infinite order transverse surface $\Z$-cork for $(X, F)$. 

In fact, the iterated surface cork twists via $(\C,f)$ realize the exotic family $\{(E(n),F_k)\}_{k\in \Z}$; that is,
\[
(X_{f^k},F)=(E(n),F_k)
\]
for every $k\in\mathbb Z$.
\end{proposition}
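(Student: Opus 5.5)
We model the construction on Gompf's infinite order cork. The key observation is that the double twist knots $K_k=\kappa(k,-1)$ are obtained from the unknot $K_0=\kappa(0,-1)$ by $-1/k$ surgery on an unknotted circle $c\subset S^3-K_0$ that encircles the $k$-twist region of Figure~\ref{fig: double-twist knots}. Thus $c$ bounds a disk $\Delta$ meeting $K_0$ in two points, and $\operatorname{lk}(c,K_0)=0$. Since $K_0$ is unknotted, $c\cup K_0$ is a two-component link with linking number zero, namely the Whitehead-type link underlying the twist knots.

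\emph{Step 1: set up the product.} In $X=E(n)_{K_0}$, the rim surgery region is $S^1\times E(K_0)\cong S^1\times S^1\times \D^2$, and we recover the fixed neighbourhood of $F$ across the gluing $\varphi$. The $k$-th twist $(X_{f^k},F)=(E(n),F_k)$ should then come from replacing $S^1\times E(K_0)$ by $S^1\times E(K_k)$. By the observation above, this is the same as cutting out $S^1\times\nu(c)$ and regluing by the $k$-th power of a Dehn twist along the torus $\{pt\}\times\partial\nu(c)$. Equivalently, we cut $S^1\times E(K_0)$ along $S^1\times\Delta$ and reglue by the $k$-fold twist. This defines a self-diffeomorphism of $S^1\times \partial$ of the cut-out piece, and $f$ is its restriction to the boundary of a suitable neighbourhood.

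\emph{Step 2: build a contractible $\C$.} $S^1\times\nu(c)$ is not contractible, so we follow Gompf. We take $\C$ to be a neighbourhood of $S^1\times\Delta$, where $\Delta$ is extended across $K_0$ so that it lives in the surface-plus-exterior picture. This extended disk now hits $F$: the two points of $\Delta\cap K_0$ become arcs after crossing with $S^1$ and passing through the gluing. We then cap off the $S^1$ direction. Using the handle structure of Section~\ref{sec: 3.1}, the $S^1$ factor (which $\varphi$ sends to $\alpha'$) bounds a vanishing-cycle $-1$-framed $2$-handle core, together with a cusp-fibre disk in $E(n)$. Adding a neighbourhood of that disk makes the $S^1$ null-homotopic, as in Gompf's construction where the extra $2$-handle cancels a $1$-handle.

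We then describe $\C$ by a Kirby diagram: a dotted circle for the $S^1$ direction, plus a $2$-handle from the capping disk running through once. The handles cancel algebraically (in fact geometrically), so $\C$ is contractible. The boundary diffeomorphism $f$ is the Dehn twist along the torus $S^1\times\partial\nu(c)$, or more precisely Gompf's boundary map. It extends over $\C-F$ after the twist, and it is the identity near $F\cap\partial\C$ because we choose the twisting region away from $F$. This gives condition (III) of Definition~\ref{def: surface cork}, and $F\cap\C$ consists of the two pieces $S^1\times(\text{arc})$, which are annuli.

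\emph{Step 3: identify the twists.} Cutting out $\C$ and regluing by $f^k$ changes $S^1\times E(K_0)$ into $S^1\times E(K_k)$ compatibly with $\varphi$, so $(X_{f^k},F)=(E(n),F_k)$. Since each $X_{f^k}=E(n)_{K_k}\cong E(n)$ canonically, the ambient manifold is unchanged. Corollary~\ref{cor: exotic family} then shows the pairs are pairwise exotic for $k_1\ne k_2$. This gives the infinite order, the $\Z$-cork property, and $\Z$-effectivity.

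The main obstacle is Step 2. We must choose the capping disk for the $S^1$ factor so that it avoids $F$, does not interfere with $\Delta$, and does not destroy the identification of the twist with the surgery on $c$. In particular, $f$ must still be well defined and equal to the identity on $F\cap\partial\C$ after enlarging the region. I would first try a direct transcription of Gompf's $E(n)$ picture, with $c$ playing the role of his twisting circle and the rim torus direction $\alpha$ matched to a vanishing cycle.
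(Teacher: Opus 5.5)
Your route is correct, but it differs genuinely from the paper's. The paper follows Gompf's template directly:
\begin{itemize}
\item It starts from the punctured torus $\Sigma$, so that $Y=I\times\Sigma\times S^1$ lies in $E(K_0)\times S^1$ and misses $F$.
\item It attaches the handles $h_{\pm1}$ along the meridians $C^*_{\pm1}$, whose cores each meet $F$ once, together with the vanishing-cycle handle $h_0$.
\item It deletes the extended cores $D_{\pm1}$. Contractibility then comes from an $H_2$/immersed-sphere argument, and $\mathcal{C}\cong B^4$ is established only later, in Theorem~\ref{thm: Ball}, via Gompf's slice trick and a Borromean-rings Kirby calculation.
\end{itemize}
You instead cap the two punctures of the Seifert disk $\Delta$ of $c$ with meridian disks $\tilde D_\pm$ of $F$. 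In effect this compresses $\Sigma$ along a meridian using a disk that meets $F$ once. Then $\nu(S^1\times\Delta')\cong S^1\times D^3$, and the capping disk for the $S^1$ factor (the same annulus plus core of $H_0$ that the paper uses for $h_0$) cancels the $1$-handle geometrically. So contractibility, and even $\mathcal{C}\cong B^4$, are immediate. The identity $(X_{f^k},F)=(E(n),F_k)$ follows as in the paper, because $f$ is supported on $S^1\times(\text{annulus about } c)$ and $\operatorname{lk}(c,K_0)=0$ preserves $\mu_K$ and $\lambda_K$.

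What the paper's route buys is a more local intersection. Its two annuli lie in small disks of $F$ (the cocores of $h_{\pm1}$), which produces the link of Section~\ref{sec: Link}. Yours are $\alpha\times(\text{arc})$, which are essential annuli, and $\partial\mathcal{C}\cap F$ is four parallel copies of $\alpha$.

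A few details should be tightened:
\begin{itemize}
\item The points of $\Delta\cap K_0$ become circles $\alpha\times\{\mathrm{pt}\}$, not arcs.
\item $f$ is $\mathrm{id}_{S^1}\times g$ on the side $S^1\times(c\times[-\epsilon,\epsilon])$ of $\partial\nu(S^1\times\Delta')$. That is, it is a twist along the $2$-torus $S^1\times c$, not along $S^1\times\partial\nu(c)$.
\item Your ``main obstacle'' is routine. Attach the capping handle along $S^1\times\{q\}$ with $q$ on the face $\Delta'\times\{\epsilon\}$. Build its core from $S^1$ times an arc in $E(K_0)-\nu(\Delta)$ running from $q$ to $\partial\nu(K_0)$. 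Continue it in $\alpha\times(B^3-\nu(\gamma))$ on the side of $\nu(\gamma)$ away from $\tilde D_\pm$, then use the paper's annulus and the core of $H_0$. This disk misses $F$, the twisting annulus, and the interior of $\nu(S^1\times\Delta')$.
\end{itemize}
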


\begin{proof}
Let the non-separating curve $\alpha\subset F$, the trivialization of the neighborhood $\nu(\alpha)=\alpha\times I\times \D^2$, and a pushed-in copy $\gamma$ of the meridian circle $\{0\}\times \partial \D^2\subset I\times \D^2$ be the same as in Corollary \ref{cor: exotic family}. View the resulting ambient manifold $E(n)_{K_k}$ of rim surgery on $(E(n), F)$ via the knot $K_k$ along the rim torus $T:= R_\alpha=\alpha\times \gamma$ as
\[
X_k:= E(n)_{K_k}=\big(X-(\alpha\times I\times \D^2)\big)\cup_{\operatorname{id}} \big((\alpha\times I\times \D^2)-\nu(T)\big)\cup_{\varphi} \big(E(K_k)\times S^1\big);
\] 
in particular, $X=E(n)_{K_0}=X_0$ (cf. Figure \ref{fig: 4}), and $(X_k, F)=(E(n), F_k)$ for all $k\in \Z$. Consider the punctured torus $\Sigma \subset E(K_0)$ shown in Figure \ref{fig: 1} (near the clasp of $K_0$, the $-s=-1$ twist box in Figure \ref{fig: double-twist knots}) with circles $C_{\pm 1}$ generating its homology. This punctured torus will play a central role in the construction of a surface cork $\C$ that follows.

\begin{figure}[h]
    \centering
    \includegraphics[scale=0.41]{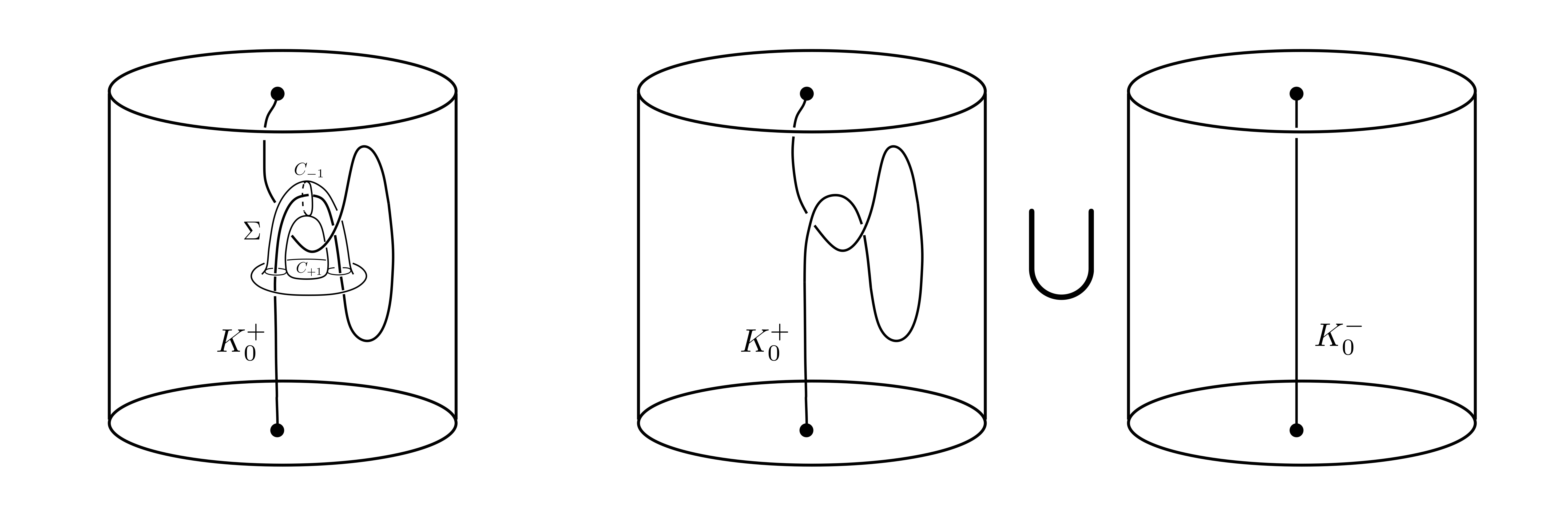}
    \caption{$(S^3,K_0)=(B^3_+,K_0^+)\cup_{S^2}(B^3_-,K_0^-)$, where $K_0^+$ and $K_0^-$ are properly embedded arcs in the two 3-balls meeting along the separating $S^2$ at two points, and the punctured torus $\Sigma\subset B^3_+- \nu(K_0^+)\subset E(K_0)$.}
    \label{fig: 1}
\end{figure}

First, note that $X_k$ for any $k\in \Z$ can be viewed as obtained from the $k=0$ case by $(-\frac{1}{k})$–surgery on the circle $\partial \Sigma$. This follows from the standard description of Dehn surgery (cf.~\cite[Ch.~9, Sec.~H]{Rolfsen2003}). More precisely, identify a tubular neighborhood of $\Sigma$ in $E(K_0)$ with $I \times \Sigma$, where $I=[-1,1]$ and $\{1\}\times \Sigma$ contains the outer part of the boundary of $I \times \Sigma$ visible in the left-hand diagram of Figure \ref{fig: 2}. Let $A$ be a collar of $\partial \Sigma$ in $\Sigma$. Then we can perform the required surgery by cutting out and regluing the solid torus $I\times A$; see the right-hand diagram of Figure~\ref{fig: 2}.

\begin{figure}[h]
    \centering
    \includegraphics[scale=0.45]{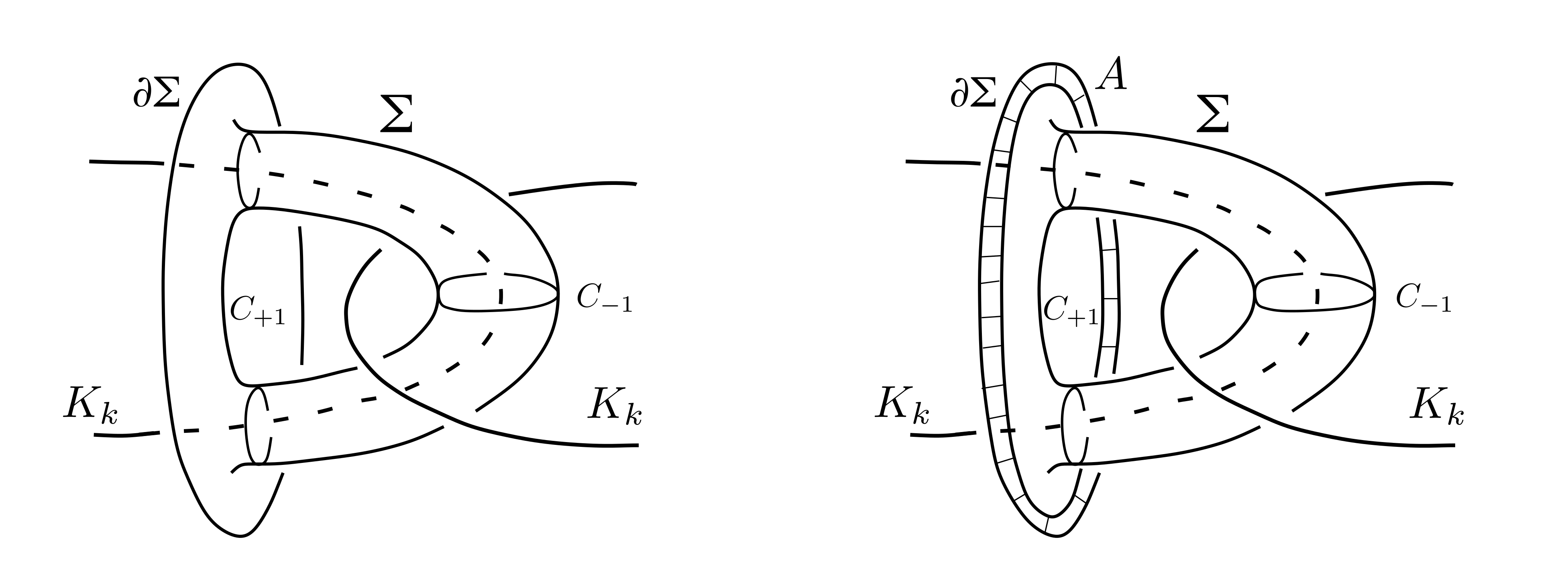}
    \caption{The punctured torus $\Sigma$ in the knot complement $E(K_0)$ and a collar $A$ of $\partial \Sigma$ in $\Sigma$.}
    \label{fig: 2}
\end{figure}

Since the surgery coefficient has numerator 1, we can take the gluing diffeomorphism to be the identity everywhere except on the annulus $I\times \partial \Sigma$. That is, we can transform $E(K_0)$ to $E(K_k)$ for arbitrary $k$ by slitting $E(K_0)$ open along the annulus $I\times \partial \Sigma$ and regluing by $g^k$
for a suitable Dehn twist $g$ of the annulus. Thus, to transform $X$ into $X_k$, and hence the pair $(X,F)$ into $(X_k,F)$, we only need to slit $X$ open along the 3–manifold $N=(I\times \partial \Sigma)\times S^1\subset E(K_0)\times S^1$ and reglue by $(g\times \operatorname{id}_{S^1})^k$.

We will construct a compact, contractible submanifold $\C\subset X$, which intersects $F$ transversely, whose boundary contains $N$, and whose intersection with $F$ is away from $N$, i.e. $\C\cap F\subset \C-N$. Extending $g\times \operatorname{id}_{S^1}$ by the identity over the rest of $\partial \C$ gives a desirable diffeomorphism $f:\partial \C\to \partial \C$---cutting out $\C$ from $X$ and regluing it using the map $f^k$ will produce the manifold $X_{k}$. Moreover, since $\C\cap F\subset \C-N$, we have $f\mid_{\C\cap F}=\operatorname{id}$. Thus, $(\C, f)$ would be an infinite order transverse surface $\Z$-cork for $(X, F)$, since we would have \[(X_{f^k}, F)=\big((X-\Int \C)\cup_{f^k} \C, F\big)=(X_k, F)=(E(n), F_k).\] To that end, we proceed with the construction of such a compact, contractible $\C$ in $X$ in three steps.

\begin{figure}
    \centering
    \includegraphics[scale=0.42]{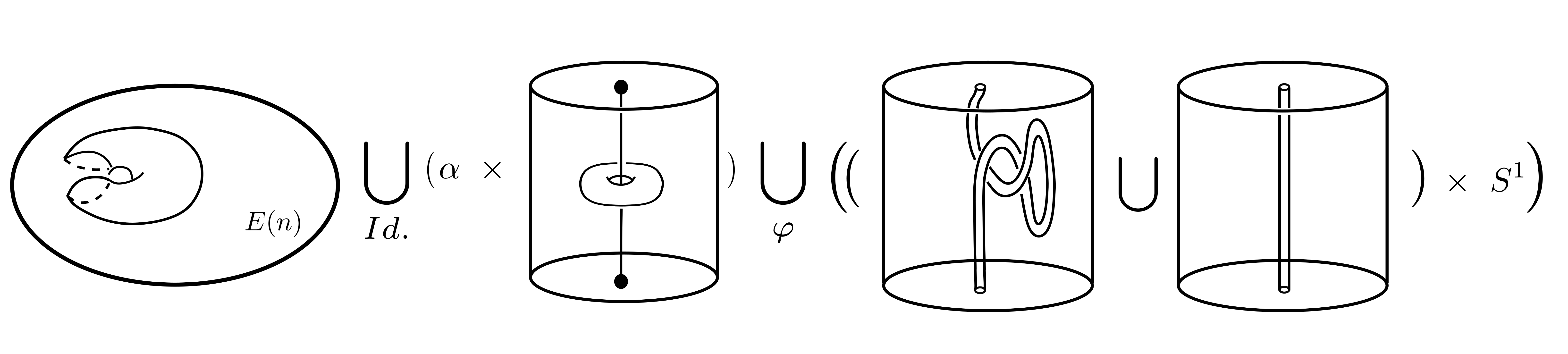}
    \caption{$X_0=\big(X-(\alpha\times I\times \D^2)\big)\cup_{\text{id}} \big((\alpha\times I\times \D^2)-\nu(T)\big)\cup_{\varphi} \big(E(K_0)\times S^1\big).$}
    \label{fig: 4}
\end{figure}

\subsubsection{A 3-step construction of the surface cork $($\C$, f)$}\label{const: 3-step surface cork}\leavevmode\par

\medskip
\noindent \textbf{Step 1: The first approximation to $\C$} 

Our first approximation to $\C$ is the submanifold $Y:=(I\times \Sigma) \times S^1\subset E(K_0)\times S^1\subset X$. 

Then $\partial Y$ clearly contains $N=(I\times \partial \Sigma)\times S^1$, but $Y$ is not simply-connected. In fact, $Y$ is homotopy equivalent to $(S^1\vee S^1)\times S^1$, and so it has $b_1=3$ and $b_2=2$ but no higher-dimensional homology. Its fundamental group $\pi_1(Y)$ is generated by three circles $C^*_{i}, i=-1, 0 ,1$, suitably attached to the base point, where:
\begin{itemize}
\item $C^*_{i}=\{i\}\times C_i\times \{\theta_i\}$ for $i=\pm 1$ and distinct points $\theta_{\pm 1}\in S^1$; and

\item $C^*_{0}=\{1\}\times \{p\}\times S^1$ for some $p\in \text{int } \big(\Sigma-(C_{-1}\cup C_1)\big)$.
\end{itemize}

A basis for $H_2(Y)$ is given by the pair of tori $T_i=\{0\}\times C_i'\times S^1, i=\pm 1$, where $C_i'$
is parallel to $C_i$ in $\Sigma-\{p\}$.

\medskip
\noindent \textbf{Step 2: A better approximation from the first} 

To improve $Y$, observe that the circles $C_{\pm 1}$ in $E(K_0)$ are both meridians of the knot $K_0$. The rim surgery construction matches all three circles $C^*_i\subset \partial Y$ with some circles in $X-\nu(T)$. Recall that the tubular neighborhood $\nu(T)$ was taken so that \[\nu(T)=\alpha\times \nu(\gamma)\subset \nu(\alpha)=\nu(F)\mid_\alpha\subset \nu(F),\] and thus $X-\nu(F)\subset X-\nu(T)$.

To see that $C_{\pm 1}^*$ bound disks in $X-\nu(T)$, consider the gluing diffeomorphism $\varphi: \partial (X-\nu(T))\to \partial (E(K_0)\times S^1)$ of the rim surgery defined in Section \ref{sec: rim surgery}, which, in our case, says: \[\varphi_*([\alpha'])=[\{1\}\times \{p\}\times S^1]=[C_0^*],\] \[\varphi_*([\gamma'])=\mu_K=[C_{+1}^*]=[C_{-1}^*],\quad\varphi_*([\partial \D^2])=\lambda_k,\] where $\alpha', \gamma'$ are pushoffs of $\alpha, \gamma$ into $\partial \nu(T)$. Thus, under the gluing map $\varphi$, the curves $C_{\pm 1}^*\subset E(K_0)\times S^1$ are identified with two curves $\varphi^{-1}(C_{\pm 1}^*)$ isotopic to the longitude $\gamma'$ of the torus $\partial \nu(\gamma)$. Since $\gamma'$ is a meridian of $F$, each of them bounds a disk $\tilde{D}_{\pm 1}$ in $\alpha\times (B^3-\nu(\gamma))=\nu(\alpha)-\nu(T)\subset E(n)-\nu(T)$.
In particular, these disks can be chosen so that each of them intersects the surface $F\cap \nu(\alpha)=\alpha\times I\times \{0\}$ transversely at one point; see Figure \ref{fig: 5}.

\begin{figure}
    \centering
    \includegraphics[scale=0.7]{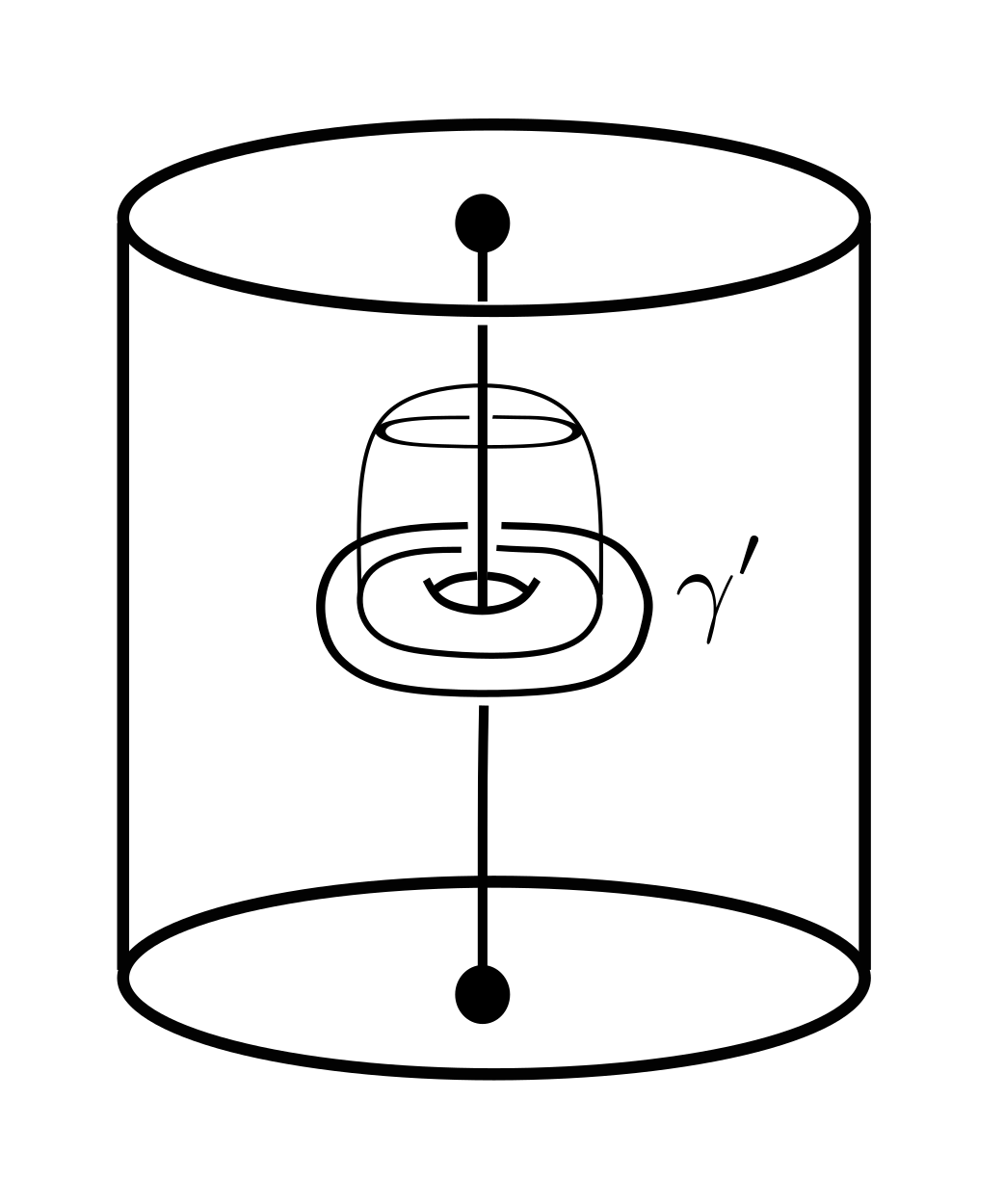}
    \caption{$C_{\pm 1}^*$ bound disks in $\alpha\times (B^3-\nu(\gamma))=\nu(\alpha)-\nu(T)\subseteq X-\nu(T)$.}
    \label{fig: 5}
\end{figure}

To see that $C_{0}^*$ bound a smooth disk in $X_0$, note that the gluing map $\varphi$ identifies $C^*_0=\{1\}\times \{p\}\times S^1
\subset E(K_0)\times S^1$ with $\alpha\times \{\text{pt}\}$ on $\partial \nu(T)$, which is isotopic to the first factor of the the surface $F=S^1\times S^1$. In view of the handle structure described in Section \ref{sec: 3.1}, there exist parallel copies of this $S^1$ factor (vanishing cycles) along which a 2-handle for $E(n)$ would be attached. Moreover, since the 2-handles for $E(n)$ with our chosen handle structure are attached to $F\times \D^2=S^1\times S^1\times \D^2$, its interior is contained in $X-\nu(F)\subset X-\nu(T)$. 

Since $C^*_0, C^*_{\pm 1}$ all bound smoothly embedded disks whose interiors are contained in $X-\nu(T)$, they are attaching circles of 2-handles $h_i$ for some handle decomposition of $X-\operatorname{int}\nu(T)$ relative to $\partial \nu(T)$.

Define $Y'=Y\cup \{\text{the 2-handles $h_i$ with $C^*_i$ as attaching circles, for $i=\pm 1, 0$}\}$. Then, $Y'$ is a simply-connected 4-dimensional submanifold, whose boundary $\partial Y'$ still contains the manifold $N$. Although $Y'$ now potentially intersects the surface $F$ in some way, it retains the property that cutting out $Y'$ and regluing it via the map $f^k$, extended from $N$ over $\partial Y'$ via the identity, will yield $X_k$.

\medskip
\noindent\textbf{Step 3: A surface cork}

The pair of tori $T_i= \{0\}\times C_i'\times S^1$ generates the second homology $H_2(Y')$. For $i=\pm 1$, the core $\tilde{D}_{i}$ of the 2-handles $h_i$ fits together with the annulus $I\times C_i\times \{\theta_i\}$, forming disk $D_i$, such that $D_{-1}$ and $D_{+1}$ are disjointly embedded rel boundary in $Y'$ (because $\{+1, -1\}\times \Sigma\times S^1\subset \partial Y=\partial (I\times \Sigma\times S^1)$), with $\partial D_i= \{-i\}\times C_i\times \{\theta_i\}$. 

Since each $D_i\cap T_i$ is empty, and $D_i\cap T_{-i}$ is a point of transverse intersection (namely the point $\{0\}\times \{\text{the intersection between $C_i'$ and $C_{-i}$}\}\times \{\theta_i\}$), deleting tubular neighborhood of these disks $D_i$ from $Y'$ gives a manifold $\C$ with no second homology. 

Furthermore, for $i=\pm 1$, the attaching circle of the $2$-handle $h_0$ is isotopic in $Y'$ to a curve $\beta_i \subset T_i$. Since the core disk of $h_0$ bounds this curve, one may surger $T_i$ along $\beta_i$, replacing a tubular neighborhood of $\beta_i$ in $T_i$ with two parallel copies of the core disk. This produces an immersed sphere $S_i \subset Y'$. In particular, the surgery can be arranged so that $S_i \cap D_i = \emptyset$ and $S_i \cap D_{-i}$ is a single transverse point. These spheres then provide null-homotopies for the meridians of the disks.

Thus, $\C$ is a simply-connected manifold with no homology and is hence contractible (cf. \cite[Corollary~4.33]{hatcher2005algebraic}). In particular, the boundary of $\C$ still contains $N$. Since $N=I\times \partial \Sigma\times S^1$ is away from $F$, if we extend the diffeomorphism $g\times \operatorname{id}_{S^1}$ of $N$ by identity to a diffeomorphism $f: \partial \C\to \partial \C$, then $f\mid_{F\cap \partial \C}=\operatorname{id}$. Furthermore, in Proposition~\ref{prop: intersection}, we show that $\C$ actually meets $F$ transversely in two annuli.

Hence, $(\C, f)$ is an infinite order transverse surface $\Z$-cork as desired.
\end{proof}
\subsection{The 2-handles}\label{sec: 2-handles} In order to understand intersection $\C\cap F$ as well as the topology of the surface cork $\C$ more explicitly, we need a precise description of the $2$-handles $h_i$, for $i=-1,0,1$, especially the framings of their attaching circles. 

\subsubsection{The 2-handles $h_{\pm 1}$} \label{4.1}
As already discussed in Construction \ref{const: 3-step surface cork}, for $i=\pm 1$, the attaching circles of $h_i$ were given by $C^*_i:=\{i\}\times \{C_i\}\times \{\theta_i\}\subset E(K_0)\times S^1$ for distinct $\theta_i$, and the core disks \[\tilde{D}_{i}\subset \{\theta_i\}\times(B^3- \nu(\gamma)) \subset \alpha\times (B^3- \nu(\gamma))\] can be taken so that each of them intersects the surface $F$ transversely at one point, distinct for each $i$. 

Thus, we may take $\D^1_{+1}\cap \D^1_{-1}=\emptyset$ in the $\alpha$ factor and $J_{+1}\cap J_{-1}=\emptyset$ in the $I$ factor in $\nu(\alpha)$, so that, explicitly, the 2-handles $h_{\pm 1}$ are given by: \[h_{i}=\D^1_i\times (\tilde{D}_{i}\times J_i)\subset \alpha\times (B^3-\nu(\gamma)), \text{ for $i=\pm 1$}.\] 

To determine the framing of each of these 2-handles, we look at a pushoff of the core disk. We can push off the core disk $\{0\}\times (\tilde{D}_{i}\times \{0\})$ to $\{0\}\times (\tilde{D}_{i}\times \{t_i\})$ for some $t_i\neq 0\in J_i$. The boundary of this pushoff, $\{0\}\times \partial \tilde{D}_i \times\{t_i\}$, represents the framing for the 2-handle.

Now we find a reference framing to compare this to. In $\alpha\times (B^3-\nu(\gamma))$, the curves $C^*_{\pm 1}$ lie on the torus $\{\theta_i\}\times \partial\nu(\gamma)$, and thus the surface $\partial \nu(\gamma)$ defines a surface framing. Since $C_i$ is a standard longitude of this torus, we see that the 2-handle framing is $0$ relative to the $\partial\nu(\gamma)$ surface framing of $C^*_i.$

Next, we follow the diffeomorphism into $E(K_0)\times S^1$ where $Y$ lives and describe the framings of these 2-handles relative to the surface $\Sigma$. In $E(K_0)=S^3-\nu (K_0)$, we arrange $\nu (K_0)$ so that $\{i\}\times C_i \subset \partial \nu (K_0)$ (cf. Figure~\ref{fig: 2}). Then, in this model, the surface framing determined by $\{i\} \times \Sigma \times \{\theta_i\}$ coincides with the surface framing determined by $\partial \nu (K_0)$ in $\partial (E(K_0)\times S^1)$. Moreover, under the identification of knot surgery, $\partial \nu(\gamma)\subset \alpha\times (B^3-\nu(\gamma))$ is identified with $\partial \nu(K_0)$, so that, locally, the surface framing determined by $\partial \nu (K_0)$ in $\partial (E(K_0)\times S^1)$ agrees with the surface framing determined by $\partial \nu(\gamma)$. 

Thus the attaching circles $C^*_i$ for the 2-handles $h_i$ for $i=\pm 1$ have framings $0$ relative to the surface framing given by $\{i\} \times \Sigma\times \{\theta_i\}$ in $\partial Y$.

\subsubsection{The 2-handle $h_{0}$}
As discussed in the proof of Proposition \ref{prop: cork}, the curve $C^*_0$ is identified with the curve $\alpha\times \{\text{pt}\}$ on $\partial \nu(T)$ which is isotopic to the first factor of the the surface $F=S^1\times S^1$. Thus, $C^*_0$ bounds a smoothly embedded disk, because in the handle structure of $E(n)$, there is a parallel copy of the first factor $S^1\times \{\text{pt}\}\times \{\text{pt}\}\subset F\times \partial \D^2=T^3$, along which a 2-handle $H_0$ is attached (cf. Section~\ref{sec: 3.1}). Recall that the $H_0$ 2-handle framing of its attaching circle $S^1\times \{\text{pt}\}\times\{\text{pt}\}$ is $-1$ relative to the $F\times \{\text{pt}\}$ framing (based on the Lefschetz fibration structure of $E(n)$). There is an isotopy that goes from this parallel copy to $S^1\times \{\text{pt}\}\times \{\text{pt}\}\subset S^1\times \gamma \times \partial \D^2=S^1\times \partial \nu(\gamma)$, which is identified with $C^*_0$. The isotopy creates an annulus with boundary $C^*_0\sqcup \{\text{the attaching circle of $H_0$}\}$. This annulus plus the core of the 2-handle $H_0$ provides a disk $D_0$ with boundary $\partial D_0=C^*_0$. The core of the 2-handle $H_0$ is away from the surface $F$, and the annulus can be taken to be disjoint from $F$ as well. Hence, the 2-handle $h_0$ is chosen so that it is disjoint from $F$.

Under the identification of the gluing, the $\alpha$-direction in $\partial(\alpha\times (B^3-\nu(\gamma))=\alpha\times \partial(B^3-\nu(\gamma))$ is identified with the $S^1$-direction in $\partial (E(K_0)\times S^1)$, so that a pushoff of $C_0^*$ viewed as $\alpha\times \{\text{pt}\}$ to some $\alpha\times \{\text{pt}'\}$ in the product direction on one side is identified with a pushoff of it viewed as $\{1\}\times \{p\}\times S^1$ to $\{1\}\times \{p'\}\times S^1$ for some $p\neq p'\in \Sigma$ on the other side. Since the framing of $H_0$ is $-1$ relative to the product framing of the oriented boundary $\partial \nu (F)=F\times \partial \D^2$, the framing of the 2-handle $h_{0}$ is $-1$ relative to the product framing $\{+1\}\times \Sigma \times S^1\subset \partial Y$.

\subsection{The intersection $\mathcal{C}\cap F$} 

We are now ready to describe how $\C$ meets $F$.

\begin{proposition}\label{prop: intersection}
    $\C$ intersects $F$ transversely in two annuli.
\end{proposition}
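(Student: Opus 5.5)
We track $F$ through each piece of the construction of $\C$ in Proposition~\ref{prop: cork}. Recall $\C = Y' - \nu(D_{+1}\cup D_{-1})$, where $Y' = Y\cup h_{-1}\cup h_0\cup h_{+1}$ and $Y=(I\times\Sigma)\times S^1$.

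\textbf{Step 1: most pieces miss $F$.} First, $Y \subset E(K_0)\times S^1$ lies in the rim-surgery region $X-\nu(T)$. The surface $F$ meets $\nu(\alpha)$ only in $\alpha\times I\times\{0\}$, which lies outside $E(K_0)\times S^1$, so $Y\cap F=\varnothing$. By Section~\ref{sec: 2-handles}, the handle $h_0$ is built from an annulus together with the core of $H_0$, and both can be chosen disjoint from $F$. Hence $h_0\cap F=\varnothing$. It follows that
\[
Y'\cap F=(h_{+1}\cup h_{-1})\cap F .
\]

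\textbf{Step 2: compute $h_i\cap F$.} We use the explicit model
\[
h_i=\D^1_i\times(\tilde D_i\times J_i)\subset \alpha\times(B^3-\nu(\gamma)).
\]
Here $F\cap\nu(\alpha)=\alpha\times I\times\{0\}$, and $\tilde D_i$ meets the slice $I\times\{0\}$ transversely in one point $q_i$. Since $J_i$ is a thickening in the $I$-direction, $(\tilde D_i\times J_i)\cap (I\times \{0\})$ is a short arc $\{q_i\}\times J_i$ transverse to the disk directions. Therefore
\[
h_i\cap F=\D^1_i\times J_i,
\]
a square, and $F$ meets $h_i$ transversely because $\tilde D_i$ is transverse to $F$ in the normal $B^3$-slice. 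The two squares are disjoint, since $\D^1_{+1}\cap\D^1_{-1}=\varnothing$.

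\textbf{Step 3: remove $\nu(D_i)$.} Since $D_i=\tilde D_i\cup(\text{annulus in }Y)$ and the annulus misses $F$, we get $D_i\cap F=\{0\}\times\{q_i\}\times\{0\}$, a single transverse point in the interior of the square $\D^1_i\times J_i$. Removing $\nu(D_i)$ deletes a small open disk around this point. Therefore
\[
\C\cap F=\bigsqcup_{i=\pm1}\bigl(\D^1_i\times J_i - \text{open disk}\bigr),
\]
which is two annuli.

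\textbf{Step 4: transversality at the boundary.} We still need $F\pitchfork\partial\C$, so that $\C\cap F$ is a properly embedded surface. The inner boundary circle comes from $\partial\nu(D_i)$, and it is transverse because $D_i\pitchfork F$. The outer boundary circle is $\partial(\D^1_i\times J_i)$, lying in $\partial h_i$.

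\textbf{Expected obstacle.} The main difficulty is the outer boundary. The corners of the product $h_i$ and the attaching region $\partial\D^1_i\times\ldots$ must be checked: the square's corners lie where $F$ hits the corners of the handle, and $F$ is tangent to the $\D^1_i$ and $J_i$ directions there. I would handle this by rounding the corners of $h_i$ (and of $\C$) in a way compatible with $F$. In a local model, this is like smoothing the corners of $\D^1\times J\times B^2$ with $F=\D^1\times J\times\{0\}$. Doing so turns $h_i\cap F$ into a smooth disk whose boundary circle is transverse to $\partial h_i$.
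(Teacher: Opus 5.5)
Your argument is correct and follows the paper's proof: $Y$ and $h_0$ miss $F$, each $h_{\pm1}$ meets $F$ in a cocore-parallel disk $\D^1_i\times J_i$, and removing $\nu(D_i)$ deletes a neighborhood of the single point $D_i\cap F$, leaving two annuli. Your Step 4 is extra care that the paper omits, with one correction. The attaching region of $h_i$ is $\partial\tilde D_i\times\D^1_i\times J_i$, not $\partial\D^1_i\times\cdots$, and $F$ misses it because $q_i$ lies in the interior of $\tilde D_i$. The boundary of the square therefore lies in the belt region $\tilde D_i\times\partial(\D^1_i\times J_i)$, so the only corners you need to round are those of the square $\D^1_i\times J_i$ itself.
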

\begin{proof}
The first approximation $Y$ is disjoint from the surface $F$. Indeed, the rim surgery operation replaces $\nu(T)$ by $E(K_0)\times S^1$, while $F$ lies in $X-\nu(T)$. Since $Y\subset E(K_0)\times S^1$, we have $Y\cap F=\varnothing$.

To understand how the better approximation $Y'$ intersects $F$, it suffices to see how each of the three newly attached 2-handles $h_{\pm 1}, h_0$  intersects $F$. 

As discussed in Section \ref{sec: 2-handles}, on one hand, the 2-handle $h_0$ is disjoint from $F$. On the other hand, the intersection between each of $h_{\pm 1}$ with the surface is a 2-disk: \[h_i\cap F= \bigl(\D^1_i\times (\tilde{D}_{i}\times J_i)\bigr)\cap  \bigl(\alpha \times (I\times \{\vec{0}\})\bigr)=\D^1_i\times (J_i\times \{\vec{0}\}),\] where $\vec{0}$ denotes the center of the $\D^2$ factor in $\alpha\times I\times \D^2$. $Y'$ thus intersects $F$ transversely in two disks, i.e.: \[Y'\cap F=\bigl(\D^1_{+1}\times (J_{+1}\times \{\vec{0}\})\bigr)\sqcup \bigl(\D^1_{-1}\times (J_{-1}\times \{\vec{0}\})\bigr).\] 

Now, to get $\C$ from $Y'$, we remove, for $i=\pm 1$, neighborhoods of the disk $D_{i}$, which was extended from the core disk $\tilde{D}_{i}$ of the 2-handle $h_i$ (see Step 3 in Construction \ref{const: 3-step surface cork}). In particular, under such removal, each of the 2-handles becomes $\tilde{D}_{i}\times A_i$, where $A_i\subset \D^1_i\times J_i$ is an annulus obtained from deleting a 2-dimensional neighborhood of the center $(0,0)$ of the cocore $\D^1_i\times J_i$. Therefore, the new intersection between the original handles and the surface for each $i$ now becomes $\{\vec{0}\}\times A_i$, where $\vec{0}\in \tilde{D}_{i}$ is the center of the core disk.

Hence, altogether, \[\C\cap F=(\{\vec{0}\}\times A_{+1})\sqcup (\{\vec{0}\}\times A_{-1}),\] a disjoint union of two annuli; see Figure \ref{fig: C cap S}.
\end{proof}

Henceforth, we denote the intersection $C \cap F$ by
\begin{equation}\label{eq: A intersection}
\A := C \cap F = (\{\vec{0}\}\times A_{+1})\sqcup (\{\vec{0}\}\times A_{-1}).
\end{equation}

\begin{figure}[h]
    \centering
\includegraphics[scale=0.4]{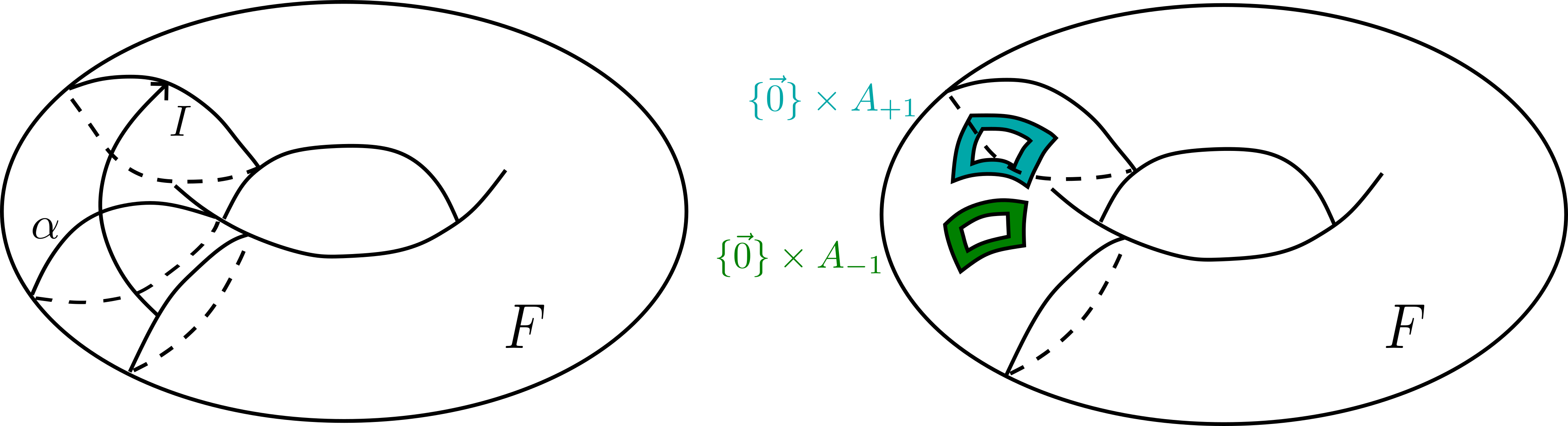}
    \caption{The intersection $\C\cap F$ is a disjoint union of two annuli $\A=(\{\vec{0}\}\times A_{+1})\sqcup (\{\vec{0}\}\times A_{-1}$).}
    \label{fig: C cap S}
\end{figure}

\section{$\C$ is a 4-ball}\label{sec: C=B4}

\subsection{A useful perspective} \label{sec: useful prespective}
The following general technique (c.f.  \cite{gompf2017infinite}) will be used later to provide a useful perspective on our transverse surface cork $(\C, f)$ constructed in the previous section.

Consider any framed embedded circle $S$ in a 3-manifold $Q$. Consider the $4$-manifold $I \times Q$, and attach a 4-dimensional $2$-handle
\begin{equation}\label{eq: 2-handle h}
h = D^2 \times \mathbb{D}^2.
\end{equation}
along $\{1\} \times S \subset \{1\} \times Q$ using the given framing of $S$. If we then delete a neighborhood of the extended core of $h$, extended down to $\{-1\} \times Q$ using the annulus $I \times S$, the resulting manifold is diffeomorphic to
\[
I \times P,
\]
where $P$ is obtained from $Q$ by Dehn surgery along $S$.

More precisely, attaching $h$ and removing its extended core, on one hand, removes from each slice $\{t\} \times Q$ for $t \in [-1,1]$ a solid torus, leaving a torus boundary. On the other hand, the attached 2-handle $h$ also provides disjoint new solid tori to distinct slices $\{t\} \times Q$ for $t \in [-1,1]$, whose union is precisely $h \setminus \text{core}$. In particular, for each $t \in [-1,1]$, the meridional disk of the corresponding new solid torus is obtained by extending a parallel copy of the core $D^2\times \{\textbf{0}\}$ of the attached $h$ by an annulus down to the slice $\{t\}\times Q$. Hence, every slice $\{t\} \times Q$ becomes the same $3$--manifold $\{t\} \times P$, where $P$ is obtained from $Q$ by Dehn surgery along $S$ with the same slope (determined by the framing of $h$) in every slice.

\subsection{$\C$ is a 4-ball}
\begin{theorem}\label{thm: Ball}
    $\C$ is diffeomorphic to $B^4$.
\end{theorem}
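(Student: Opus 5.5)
The plan is to apply the perspective of Section~\ref{sec: useful prespective} twice, once for each of $h_{+1}$ and $h_{-1}$, and then deal with the remaining handle $h_0$ separately.

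Recall how $\C$ is built. We start with $Y=I\times(\Sigma\times S^1)$, viewed as $I\times Q$ with $Q=\Sigma\times S^1$ and the $I$-factor being the interval normal to $\Sigma$. We attach $h_{\pm 1}$ along $C^*_{\pm1}$, which lie on the ends $\{\pm1\}\times Q$. We attach $h_0$ along $C^*_0\subset\{1\}\times Q$. Finally we delete neighborhoods of the extended cores $D_{\pm1}$, which are extended by the annuli $I\times C_{\pm1}\times\{\theta_{\pm1}\}$.

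\textbf{Step 1: absorb $h_{\pm1}$ into a product.} I would apply the Section~\ref{sec: useful prespective} argument to $h_{+1}$ and $h_{-1}$. One of them is attached on the $\{-1\}$ end, so we flip the $I$-coordinate locally. The supports $\theta_{+1}\neq\theta_{-1}$ are disjoint, so the two surgeries commute. Attaching $h_{\pm1}$ and removing $\nu(D_{\pm1})$ then yields $I\times P$. Here $P$ is obtained from $\Sigma\times S^1$ by Dehn surgery on $C_{+1}\times\{\theta_{+1}\}$ and $C_{-1}\times\{\theta_{-1}\}$. By Section~\ref{4.1}, both surgeries have framing $0$ relative to the $\Sigma$-surface framing.

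\textbf{Step 2: identify $P$.} $\Sigma\times S^1$ is a punctured torus times a circle. Doing $0$-surgery (with the $\Sigma$ framing) on a curve $C_i\times\{\theta_i\}$ amounts to cutting the $\Sigma$-slice along $C_i$ and capping. Concretely, one can model this as attaching a round $1$-handle pair, or more cleanly view $\Sigma\times S^1$ as plumbing/surface bundle data and track the result. My expectation is that after both surgeries, $P$ becomes $S^1\times D^2$, where $\{p\}\times S^1$ (that is, $C^*_0$) is a core circle with framing identified with the product framing. A cleaner route would be Kirby calculus in $\partial$. Draw $\Sigma\times S^1$ as a $0$-framed Borromean-type surgery description of $T^3$ minus a neighborhood; equivalently, use the standard diagram of $(\text{punctured }T^2)\times S^1$ as $S^3$ minus the neighborhood of a knot, with surgeries on two meridian-linked circles. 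Then add the two $0$-framed curves $C_{\pm1}$, which are parallel to those circles, and cancel.

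\textbf{Step 3: add $h_0$.} Now $\C=(I\times P)\cup h_0$. If $P\cong S^1\times D^2$ with $C^*_0$ isotopic to $S^1\times\{0\}$, then $I\times P\cong S^1\times B^3$. The handle $h_0$ is attached along a circle isotopic to the $S^1$-factor, so it cancels the $1$-handle regardless of its framing ($-1$). Hence $\C\cong B^4$.

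\textbf{Main obstacle.} Step 2 is where I expect the difficulty. The two surgeries lie at different $S^1$-heights and different ends. One has to verify that $0$-surgery with the $\Sigma$-framing on $C_{+1}$ and $C_{-1}$ (which meet once in $\Sigma$) really produces a solid torus, rather than something like a punctured $S^1\times S^2$ summand, and that $C^*_0$ becomes its core. I would first try drawing an explicit Kirby diagram of $I\times\Sigma\times S^1$: a $0$-handle, three $1$-handles for $C_{\pm1}$ and the $S^1$ factor, and $2$-handles for the commutators. Then I would add $h_{\pm1},h_0$ as $2$-handles and $D_{\pm1}$-removals as dotted circles, and perform handle cancellations directly. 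Boundary-wise, this should also be checked against the fact that $\partial\C$ must be $S^3$.
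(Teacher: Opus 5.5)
Your overall architecture matches the paper's. You use the product trick of Section~\ref{sec: useful prespective} to absorb $h_{\pm1}$ and the removal of $D_{\pm1}$ into $I\times P$. Then $h_0$ cancels the $1$-handle of $I\times P\cong S^1\times B^3$ regardless of its framing. Steps 1 and 3 are fine.

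The gap is Step 2, which is the real content of the theorem. You only state an expectation that $P\cong S^1\times D^2$ with $C_0^*$ isotopic to the core, and you leave open whether an $S^1\times S^2$ summand appears instead. Without that identification nothing has been proved.

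The Kirby-calculus sketch you give also points the wrong way. As written, it describes the $0$-framed curves $C_{\pm1}\times\{\theta_{\pm1}\}$ as \emph{parallel} to the two surgery circles. If that were so, subtracting each from its parallel surgery circle would split off a $0$-framed unknot. That produces exactly the $S^1\times S^2$ summands you worry about, and no cancellation occurs. Your local heuristic (cut the slice $\Sigma\times\{\theta_i\}$ along $C_i$ and cap) describes only one fiber and does not determine $P$ globally.

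The missing identification, as in the paper, goes as follows.
\begin{enumerate}
\item Cap off $\Sigma$ to get $Q=\Sigma\times S^1\subset T^2\times S^1=T^3$. Present $T^3$ as $0$-surgery on the Borromean rings $B_1\cup B_2\cup B_3$, so that $Q$ is obtained by leaving $B_3$ unfilled.
\item Under this identification, $\Sigma\times\{\text{pt}\}$ is a genus-one Seifert surface for $B_3$ disjoint from $B_1\cup B_2$. The curve $\partial\Sigma\times\{\theta\}$ is the Seifert longitude of $B_3$, and $\{p\}\times S^1=C_0^*$ is a meridian of $B_3$.
\item The surgery curves $C_{\pm1}\times\{\theta_{\pm1}\}$ are \emph{meridians} of $B_1$ and $B_2$ lying on that Seifert surface. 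Their $\Sigma$-framing agrees with their Seifert framing, so both surgeries are $0$-framed in $S^3$.
\item Each pair (the $0$-framed unknot $B_j$, its $0$-framed meridian) cancels. Slide the other strands off $B_j$ over the meridian; for the unfilled $B_3$ this is just an isotopy of a knot in the surgered manifold. Then erase the Hopf pair.
\item What remains is the exterior of the unknot $B_3$ in $S^3$. Hence $P\cong S^1\times D^2$, and its core is isotopic to the meridian of $B_3$, i.e.\ to $C_0^*$.
\end{enumerate}
This also answers your worry: there is no $S^1\times S^2$ summand precisely because the surgery curves are meridians, not parallels. With this in hand, your Step 3 finishes the proof exactly as in the paper.
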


\begin{proof}
To identify the transverse surface cork $\C$ as a 4-ball, we apply the trick described in Section \ref{sec: useful prespective} with $Q = \Sigma \times S^1$ from Construction \ref{const: 3-step surface cork}. Attaching the 2-handles $h_{\pm 1}$ to $Y = I \times Q$ and deleting their cores $D_{\pm 1}$ gives a manifold of the form $I \times P$ that will become $\C$ when the 2-handle $h_0$ is attached.

The manifold $P$ is obtained from $Q$ by surgery on the disjoint curves $C_{\pm 1} \times \{\theta_{\pm 1}\}$, whose framing coefficients are both 0 relative to the surface framing induced by the punctured torus $\Sigma$, as discussed in Section \ref{4.1}. 

To understand $Q$ using a surgery diagram, we first cap off $\Sigma$ to get an embedding $Q = \Sigma \times S^1 \subset T^2 \times S^1 = T^3$, with the latter exhibited as $0$-surgery on the Borromean rings $B$. Then, we recover $Q$ by removing its complementary solid torus in $T^3$: undoing one Dehn filling and leaving one component $B_3$ of $B$ unfilled (cf. the leftmost diagram in Figure \ref{fig: 12}). The curves $\partial\Sigma \times \{\theta\}$ correspond to canonical longitudes of this drilled out link component, and $\{p\} \times S^1$ is a meridian of it. The surgery curves $C_{\pm 1} \times \{\theta_{\pm 1}\}$ are then meridians of the other two components. After isotoping the Borromean rings picture to the form shown in the leftmost diagram of Figure \ref{fig: Seifert vs Sigma Framings}, we observe a copy of $\Sigma \times \{\text{pt}\} \subset T^2 \times S^1$ in the surgery diagram with $B_3$ as its boundary. In particular, in view of the third and rightmost diagram of Figure \ref{fig: Seifert vs Sigma Framings}, for $i=\pm 1$, the $\Sigma$--framing of $C_{i} \times \{\theta_{i}\}$ is 0 relative to its Seifert framing in $S^3$. That is, both surgery curves are $0$--framed.

\begin{figure}[h]
\includegraphics[scale=0.43]{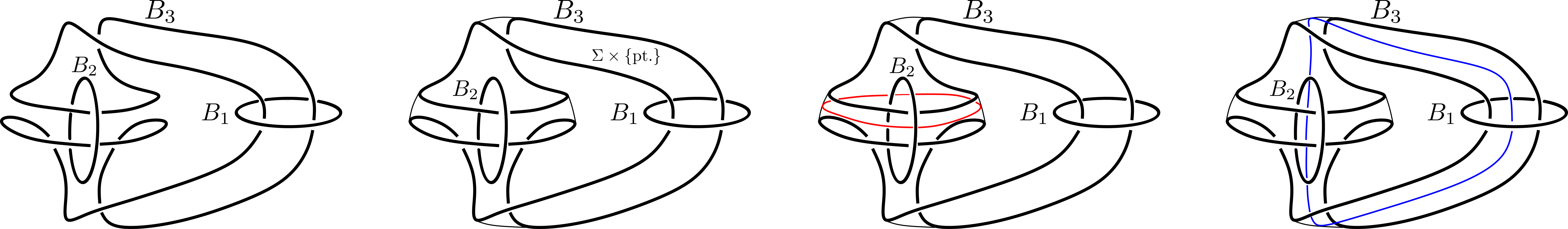}
\caption{Leftmost: Borromean rings; second: $\Sigma \times \{\text{pt}\} \subset T^2 \times S^1$ in this diagram with $B_3$ as its boundary; third: the red circle is a meridian of $B_2$ lying on $\Sigma \times \{\text{pt}\}$; rightmost: the blue circle is a meridian of $B_1$ lying on $\Sigma \times \{\text{pt}\}$. }
\label{fig: Seifert vs Sigma Framings}
\end{figure}

Since 0-surgery on an unknot followed by 0-surgery on its meridian cancels, $P$ is the complement of the unfilled unknot component $B_3$ in $S^3$ (cf. Figure \ref{fig: 12}). $\C$ is then the result of attaching a $(-1)$-framed 2-handle $h_0$ to $I \times P\cong I\times (S^1\times \D^2)$ along $C^*_0$. Since $C_0^*=\{+1\}\times \{p\}\times S^1\subset \{+1\}\times \Sigma \times S^1$, which inherits the positive orientation from $\Sigma \times S^1$, the framing coefficient is $-1$ relative to $Q=\Sigma \times S^1$.
\begin{figure}[h]
\includegraphics[scale=0.4]{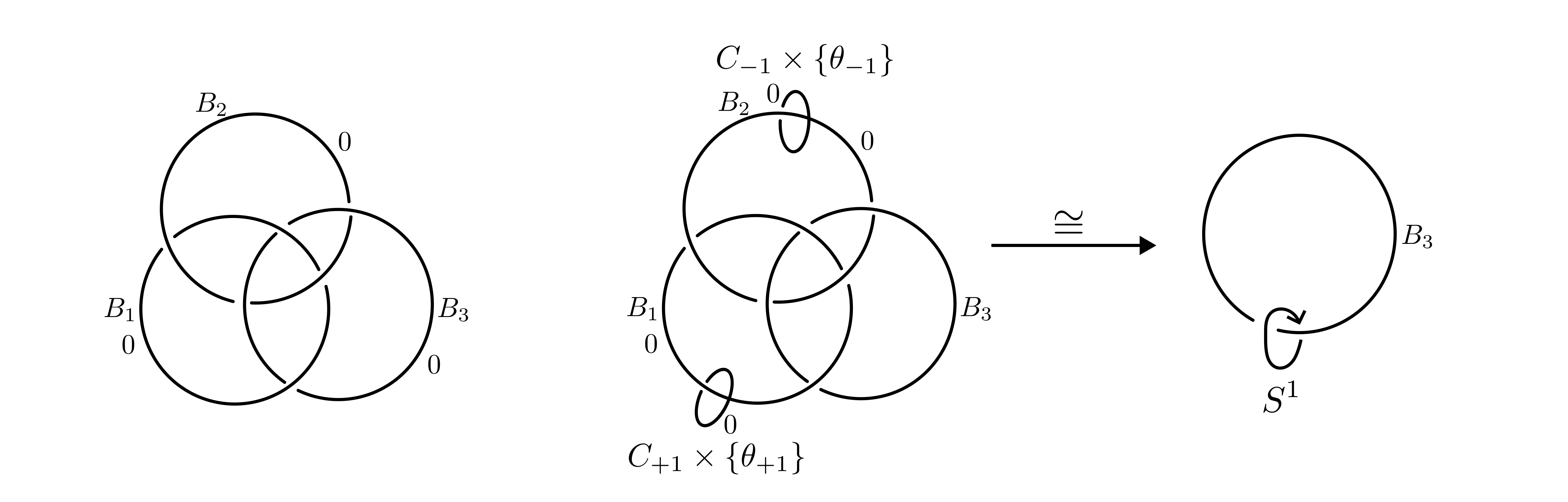}
\caption{After cancellation, $P$ is the complement of the unfilled unknot component $B_3$ in $S^3$ and $P\cong S^1\times \D^2$, whose $S^1$-direction is indicated in the rightmost diagram.}
\label{fig: 12}
\end{figure}

Therefore, writing the 2-handle $h_0=D^2_0\times \D^2$, we see that \[\C\cong I\times (S^1\times \D^2)\bigcup_{\{+1\}\times (S^1\times \{\text{pt}\})\leftrightarrow \partial D_0^2\times \{\text{pt}\}} D^2\times \D^2\cong B^4.\]

\end{proof}

\begin{corollary} \label{cor: strong}
    The transverse surface $\Z$-cork $(\C, f)$ for $(X, F)$ is ambiently extendible.
\end{corollary}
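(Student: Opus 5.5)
Since $\C$ is diffeomorphic to $B^4$ by Theorem~\ref{thm: Ball}, the plan is to reduce ambient extendibility to the Alexander trick, after checking that $f$ is a genuine diffeomorphism of the $3$-sphere $\partial\C\cong S^3$. Definition~\ref{def: extendible surface cork} only asks that the boundary diffeomorphism $f:\partial\C\to\partial\C$ extend over $\C$ as a diffeomorphism. By Theorem~\ref{thm: Ball}, $\partial \C\cong S^3$. So $f$ is a diffeomorphism of $S^3$.

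The key input is Cerf's theorem ($\Gamma_4=0$): every orientation-preserving diffeomorphism of $S^3$ is smoothly isotopic to the identity. Given such an isotopy $f_t$ with $f_0=\operatorname{id}$ and $f_1=f$, define an extension over a collar $[0,1]\times S^3\subset B^4$ by $(t,x)\mapsto (t,f_t(x))$, reparametrized so that it is the identity near $t=0$. Then extend by the identity over the inner ball. This gives a diffeomorphism $\C\to\C$ restricting to $f$ on $\partial \C$. A topological Alexander-trick coning would only give a homeomorphism, which is why Cerf's smooth result is needed.

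The one point to verify is that $f$ preserves orientation. On the region $N=(I\times\partial\Sigma)\times S^1$ it equals $g\times\operatorname{id}_{S^1}$, where $g$ is a Dehn twist of the annulus $I\times\partial\Sigma$; a Dehn twist is isotopic to the identity through homeomorphisms that are not fixed on the boundary, so it preserves orientation. Elsewhere $f$ is the identity. Since $\partial\C$ is connected and $f$ is the identity on a nonempty open set, $f$ preserves orientation everywhere.

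I expect no real obstacle beyond citing Cerf correctly. It is also worth noting, as a consistency check, that extendibility does not contradict exoticness here. Remark~\ref{rmk: Extendible} fails for transverse corks because an extension $\C\to\C$ need not preserve the annuli $\A=\C\cap F$, so the resulting diffeomorphism $X_{f^k}\to X$ need not carry $F$ to $F$.
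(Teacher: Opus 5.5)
Your proposal is correct and follows the paper's argument: it uses $\partial\C\cong S^3$ and the fact that orientation-preserving diffeomorphisms of $S^3$ are isotopic to the identity, then extends $f$ over a collar exactly as the paper later does when it builds $\Psi$. The paper cites Hatcher's proof of the Smale conjecture, whereas you cite Cerf's $\Gamma_4=0$, which is the weaker statement actually needed; your explicit check that $f$ preserves orientation (it is the identity on a nonempty open subset of the connected $\partial\C$) fills in a point the paper leaves implicit.
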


\begin{proof}
    Hatcher proved the Smale conjecture in \cite{Hatcher1983APO}, which says that every orientation-preserving self-diffeomorphism of $\partial \C\cong S^3$ is isotopic to the identity. Thus, our diffeomorphism $f$ on $\partial \C$ extends to a diffeomorphism of $\C$.
\end{proof}

\begin{remark} \label{remark: 4.3}
    \begin{enumerate}
        \item The surface cork $(\C\cong B^4, f)$ we constructed generates many other similar families of pairs $(X, F_k)$ of closed surfaces inside closed 4-manifolds $X$, beyond $E(n)$. We can vary the starting pair $(X, F_0)$ satisfying Theorem \ref{thm1.1}, so that the resulting pairs $(X, F_k)$ are distinguishable. Since the existence of the 0-framed 2-handles $h_{\pm 1}$ is a result of the rim surgery operation alone and independent of the choice of the ambient 4-manifold and embedded surface, and the fact that our surface cork is a 4-ball is independent of the framing of the 2-handle $h_0$, the so constructed $(B^4, f)$ will be a transverse surface cork for $(X, F_0)$ as long as the curve $C_0^*$ identified with our choice of the non-separating curve $\alpha\subset F$ bounds a smoothly embedded disk in the complement $X-\nu(T)$ of the rim torus.

        \item Since our construction only uses a single clasp of each of the knots $K_k$, we can apply the construction to other families of knots related by the twisting of a clasp as in Figure \ref{fig: double-twist knots}. In particular, we will have the same surface cork $(B^4, f)$ which realizes families of pairs parametrized by other families of knots (with distinct Alexander polynomials). 

        \item Even more generally, similar to how Tange extended constructions in \cite{gompf2017infinite} to $\mathbb{Z}^m$-corks in \cite{Tange2016}, our construction can be extended to transverse surface $\mathbb{Z}^m$-corks $(\C_m, \Z^m)$ for $(X, F)=(E(m), F_0)$ for each $m\in \N$. In particular, $\C_m$ is the boundary sum of $m$ copies of our transverse surface cork $\C$, so that the surface corks $\C_m\cong B^4$ for all $m$. The boundary diffeomorphism of each $\C_m$ comes from twisting at distinct two clasps. Each $(\C_m, \Z^m)$ admits a $\Z^m$-effecitive embedding into $E(m)$ and realizes an $\Z^m$-indexed exotic family constructed by rim surgery via particular $\Z^m$-indexed family of knots.
    \end{enumerate}
\end{remark}

\subsection{The surfaces $F_k$}

Since $\C\cong B^4$, one can understand the exotic family of surfaces inside the original ambient space $E(n)$ (before any rim surgery is performed) as follows. 

For each $k\in \Z$, there is an isotopy $f_t: \partial \C\times I=S^3\times I\to S^3$ such that $f_0$ is the identity and $f_1=f^k$, which induces a diffeomorphism 
\begin{align}
    \tilde{f}:S^3\times I\to S^3\times I, \quad (x,t)\mapsto (f_t(x), t).
\end{align}
Identify $S^3\times I$ with a collar neighborhood of the boundary $N(\partial \C)\cong \partial C\times I$, where $\partial \C\times \{0\}=S^3\times \{0\}$ corresponds to the inner boundary and $\partial \C\times \{1\}=S^3\times \{1\}$ corresponds to the outer boundary. Then, $\tilde{f}$ can now be extended by identity to a diffeomorphism $\Psi$ of $\C$, namely: 
\[
\Psi(x) =
\begin{cases}
\tilde{f}(x) & \text{if } x \in N(\partial \C),\\
x & \text{if } x \in \C- N(\partial \C).
\end{cases}
\]

\begin{lemma}\label{lem: Sk in X}
There is a diffeomorphism between the pairs 
\begin{align}
    ((X- \operatorname{int} \C)\cup_{\operatorname{id}}\C, (F- \A)\cup_{\operatorname{id}} \Psi (\A))
\end{align}
and \[((X- \operatorname{int} \C)\cup_{f^k}\C, F).\]
\end{lemma}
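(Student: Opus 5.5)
The plan is to write down an explicit map and check that it is a well-defined diffeomorphism of pairs. Define
\[
\Phi : (X-\operatorname{int}\C)\cup_{\operatorname{id}}\C \longrightarrow (X-\operatorname{int}\C)\cup_{f^k}\C
\]
to be the identity on $X-\operatorname{int}\C$ and $\Psi^{-1}$ on $\C$. (Depending on the gluing convention, the same argument works with $\Psi$ in place of $\Psi^{-1}$.)

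\textbf{Step 1: $\Phi$ is well defined.} In the target, a boundary point $x\in\partial\C$ of the inner piece is identified with $f^k(x)\in\partial(X-\operatorname{int}\C)$. In the source it is identified with $x$ itself.

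Take a boundary point $y$ on the outer side. It maps by the identity to $y$, which in the target equals the inner point $f^{-k}(y)$. On the inner side, $\Psi^{-1}(y)=\tilde f^{-1}$ at level $t=1$, which is $f_1^{-1}(y)=f^{-k}(y)$. So the two definitions agree.

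Since $\Psi$ is the identity near the inner end $\partial\C\times\{0\}$ of the collar and on $\C-N(\partial\C)$, it is smooth on $\C$. To get smoothness across $\partial\C$ in the glued manifold, first reparametrize the isotopy $f_t$ so that it is constant near $t=0$ and $t=1$. Then $\Psi$ agrees with $f^{k}$ on a collar of $\partial\C$, and $\Phi$ is a diffeomorphism, by the standard fact that a diffeomorphism of the glued piece matching the gluing maps in collars assembles smoothly.

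\textbf{Step 2: $\Phi$ carries the surface correctly.} The surface in the source is $(F-\A)\cup\Psi(\A)$. On the outer piece, $\Phi$ is the identity, so $F-\operatorname{int}\A$ goes to itself. On $\C$, $\Psi(\A)$ goes to $\Psi^{-1}\Psi(\A)=\A$, up to the orientation convention fixed above.

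The glued surface in the target is exactly $F=(F-\operatorname{int}\A)\cup_{f^k}\A$. This is well defined because $f|_{\partial\A}=\operatorname{id}$ (Proposition~\ref{prop: cork}, together with \eqref{eq: A intersection}). It remains to check that the two pieces of the source surface meet correctly along $\partial\A\subset\partial\C$. They do, because $\Psi|_{\partial \C}=f^k$ restricts to the identity on $\partial\A$.

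\textbf{Main obstacle.} The only delicate point is the consistency of conventions: which end of the collar $\tilde f$ equals $f^k$, whether to use $\Psi$ or $\Psi^{-1}$, and the collar reparametrization needed for smoothness. If the identifications come out reversed, replace $f_t$ by $f_{1-t}$ or use the inverse isotopy. The isotopy $f_t$ itself exists by Hatcher's theorem (Corollary~\ref{cor: strong}).
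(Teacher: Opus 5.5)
Your map $\operatorname{id}\cup\Psi^{-1}$ is exactly the inverse of the paper's map $\operatorname{id}\cup\Psi\colon(X-\operatorname{int}\C)\cup_{f^k}\C\to(X-\operatorname{int}\C)\cup_{\operatorname{id}}\C$, so this is essentially the paper's proof; your reparametrization of $f_t$ to be constant near $t=0,1$ also supplies the smoothness check that the paper leaves implicit. One small correction: the convention you actually used (a point $x\in\partial\C$ glued to $f^k(x)$) is the only one under which the lemma holds as stated, and your parenthetical fallback of using $\Psi$ instead of $\Psi^{-1}$ under the opposite convention would fail, because the surface piece $\Psi(\A)$ would then be sent to $\Psi^2(\A)$ rather than to $\A$.
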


\begin{proof}
Consider the map \[\operatorname{id}\cup \Psi: (X- \operatorname{int} \C)\cup_{f^k}\C \to (X- \operatorname{int} \C)\cup_{\operatorname{id}}\C.\] This is a well-defined diffeomorphism, since it restricts to a diffeomorphism on both $(X- \operatorname{int} \C)$ and $\C$ and respects the gluing.

Moreover, $\operatorname{id}\cup \Psi$ restricts to a diffeomorphism \begin{align*}
    (F- \A)\cup_{f^k} \A \to (F- \A)\cup_{\operatorname{id}} \Psi(\A).
\end{align*}   
\end{proof}
Note that $F_k$ differs from $F$ only by modifying its collar neighborhood near $\partial \C$. The replacement of the trivial concordance from $F\cap \partial \C$ to itself by the trace of the isotopy $f_t$ yields the exotic surface.

\begin{remark}
    That our surface cork twist is isotopic to the identity proves that the link $F\cap \partial \C$ has an interesting loop in its link space (the space of links in that isotopy class). The trace of this isotopy gives annuli embedded in $S^3\times I$ which, when inserted into $F$, modify it to an exotic version. 
\end{remark}

\section{The Link of Intersection} 
\label{sec: Link}
From now on, to simplify the notation in Equation (\ref{eq: A intersection}) from Proposition~\ref{prop: intersection}, we will write the annuli $\{\vec{0}\}\times A_{+1}$ as $A_{+1}$ and $\{\vec{0}\}\times A_{-1}$ as $A_{-1}$, so that  \[\A=\C\cap F=A_{+1}\sqcup A_{-1}.\]

For each $i=\pm 1$, since the annulus $A_i$ comes from removing a neighborhood of the origin from the cocore of the 2-handle $h_i$, we shall view the cocore as the disk centered at the origin in $\R^2$ with the standard orientation of the plane, so that $A_i$ can be identified with the annulus $A(\frac{1}{2},1)=\{(r,\theta): \frac{1}{2}\leq \theta \leq 1\}\subset \R^2$. Denote the inner circle $\{(r,\theta): \theta =\frac{1}{2}\}$ by $S^1_{1/2}$ and the outer circle 
$S^1_1=\{(r,\theta): \theta =1\}$ by $S^1_1$, so that 
$A(\frac{1}{2},1)$ has oriented boundary $\partial A(\frac{1}{2},1)=(-S^1_{1/2})\sqcup S^1_1$. We will henceforth denote the boundary component of $A_i$ identified with the inner circle $S^1_{1/2}$ by $\partial^{-} A_i$ and that identified with the outer circle $S^1_1$ by $\partial^{+} A_i$.

The boundary $\partial \C$ of the surface cork then intersects the surface $F$ in a 4-component link \[L:=\partial A_{+1}\sqcup \partial A_{-1}=\partial^\pm A_{+1}\sqcup \partial^\pm A_{-1},\] with $\partial^+A_{+1}, \partial^-A_{-1}\subset \{+1\}\times P$ and $\partial^- A_{+1}, \partial^+A_{-1}\subset \{-1\}\times P$, which reflects the fact that $h_{+1}$ was attached to $\{+1\}\times Q$ while $h_{-1}$ was attached to $\{-1\}\times Q$, with $Q$ and $P$ as in Theorem \ref{thm: Ball}.

For $i=\pm 1$, the annulus $A_i$, were previously described in Proposition \ref{prop: intersection} as
\[
\{\vec 0\}\times\bigl((\mathbb D^1_i\times J_i)-\nu((0,0))\bigr) \subset \tilde{D}_i\times (D^1_i\times J_i),\]
which, in the coordinates of Equation~\eqref{eq: 2-handle h}, is described by
\[
\{\vec 0\}\times(\mathbb D^2-\nu(\textbf{0}))\subset D^2\times \D^2.
\] Thus, the interval direction of $A_i$ (when viewed as ``$S^1\times I$") actually corresponds to the $I$-factor direction in the product $I\times P$ (cf. Section \ref{sec: useful prespective}). That is, for $i=\pm 1$, $\partial^+ A_i\subset \{i\}\times  P$ and $\partial^- A_i\subset \{-i\}\times  P$.


Therefore, in the diagram for $\{+1\}\times P$, we see $\partial^+ A_{+1}$ and $\partial^- A_{-1}$ appear as merdians of the surgery curves $C_{+ 1}\times \{\theta_{+1}\}$ and $C_{-1}\times \{\theta_{-1}\}$ respectively. Similarly, in the diagram for $\{-1\}\times P$, we see $\partial^- A_{+1}$ and $\partial^+ A_{-1}$ appear as merdians of the surgery curves $C_{+ 1}\times \{\theta_{+1}\}$ and $C_{-1}\times \{\theta_{-1}\}$ respectively. 

We shall now draw all 4 components of this link $L$ inside a surgery diagram of $\partial \C\cong S^3$. 

To that end, we begin with Figure \ref{fig: 1 x P and -1 X P}. 

\begin{figure}[h]
    \centering
    \includegraphics[scale=0.5]{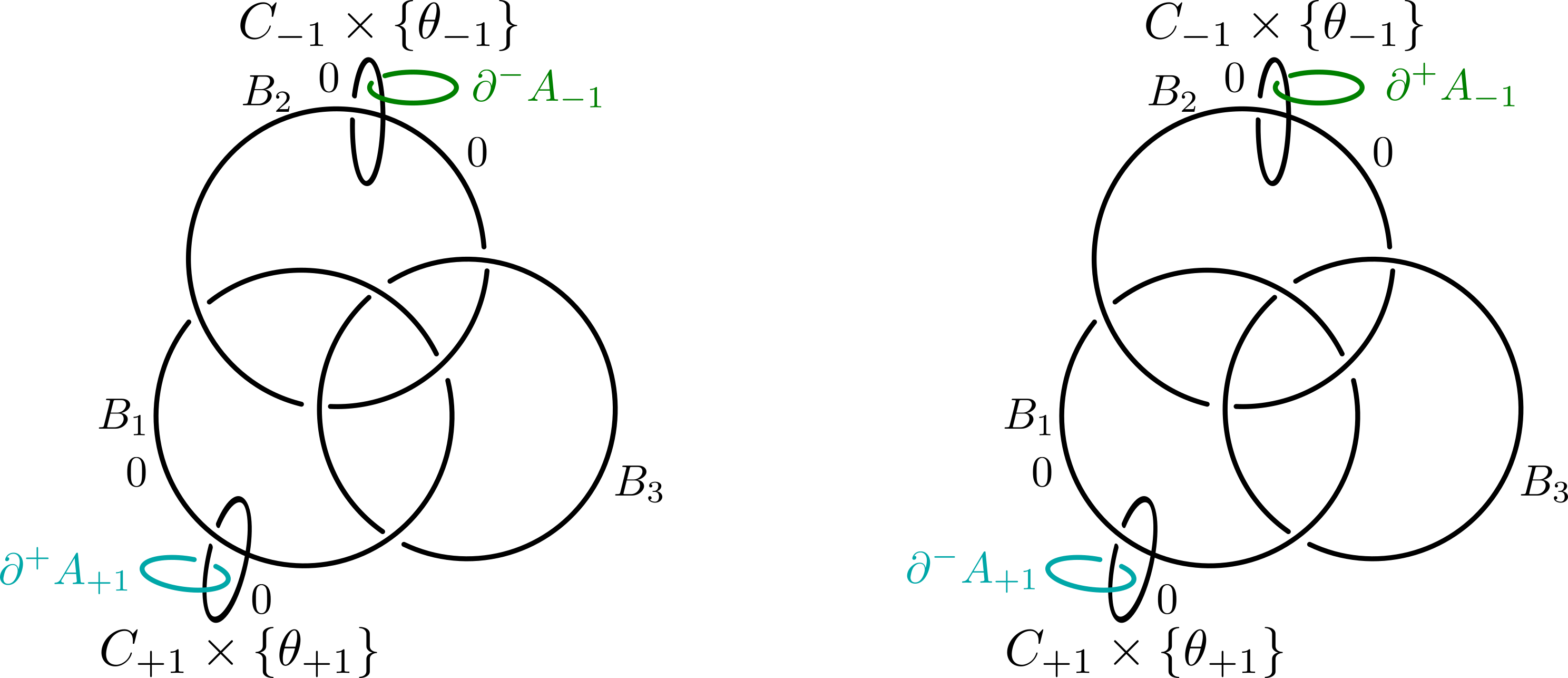}
    \caption{$\partial^+ A_{+1}$ and $\partial^- A_{-1}$ appear as meridians of the surgery curves $C_{+ 1}\times \{\theta_{+1}\}$ and $C_{-1}\times \{\theta_{-1}\}$ in $\{+1\}\times P$, and $\partial^- A_{+1}$ and $\partial^+ A_{-1}$ appear as merdians of the surgery curves $C_{+ 1}\times \{\theta_{+1}\}$ and $C_{-1}\times \{\theta_{-1}\}$ in $\{-1\}\times P$.}
    \label{fig: 1 x P and -1 X P}
\end{figure}

We use the unknot components $B_1$ and $B_2$ to slide all 4 components $\partial^{\pm } A_{\pm 1}$ of $L$ off the surgery curves $C_{+ 1}\times \{\theta_{+1}\}$ and $C_{-1}\times \{\theta_{-1}\}$. Then, we cancel $B_1$ with $C_{+ 1}\times \{\theta_{+1}\}$ and $B_2$ with $C_{-1}\times \{\theta_{-1}\}$ by erasing the pairs, obtaining Figure \ref{fig: L in P}. 

\begin{figure}[h]
    \centering
    \includegraphics[scale=0.5]{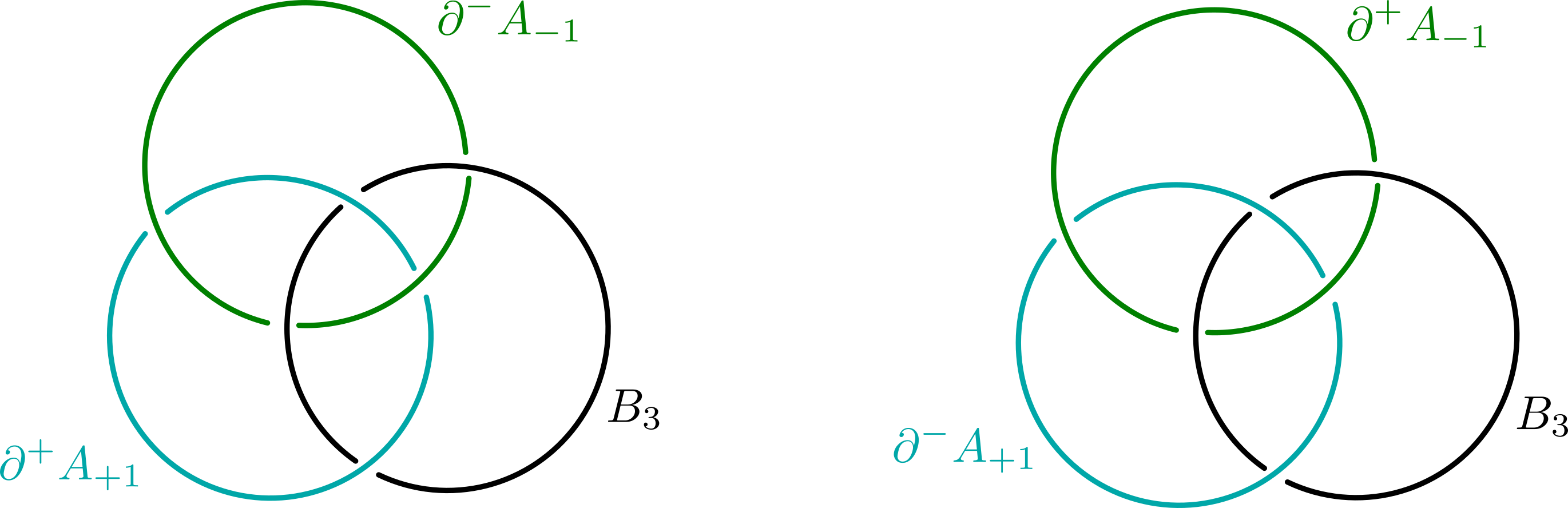}
    \caption{$\partial^+ A_{+1}$, $\partial^- A_{-1}$ in $\{+1\}\times P$ and $\partial^- A_{+1}$, $\partial^+ A_{-1}$ in $\{-1\}\times P$ after cancellation.}
    \label{fig: L in P}
\end{figure}

Now, to correctly orient $\partial (I\times P)=P\cup_{\text{id}_{\partial P}}\overline{P}$ as the double of $P$,  the result of gluing $\{+1\}\times P$ to $\{-1\}\times P$ along the torus boundary (namely, the boundary of a regular neighborhood of $B_3$) via the identity, we reverse the orientation of $\{-1\}\times P$ by taking the mirror image of the diagram on the LHS in Figure \ref{fig: L in P}. So we have the correctly oriented diagrams of $L$ in $\partial I\times P$ shown in Figure \ref{fig: L in P (oriented)}.

\begin{figure}[h]
    \centering
    \includegraphics[scale=0.5]{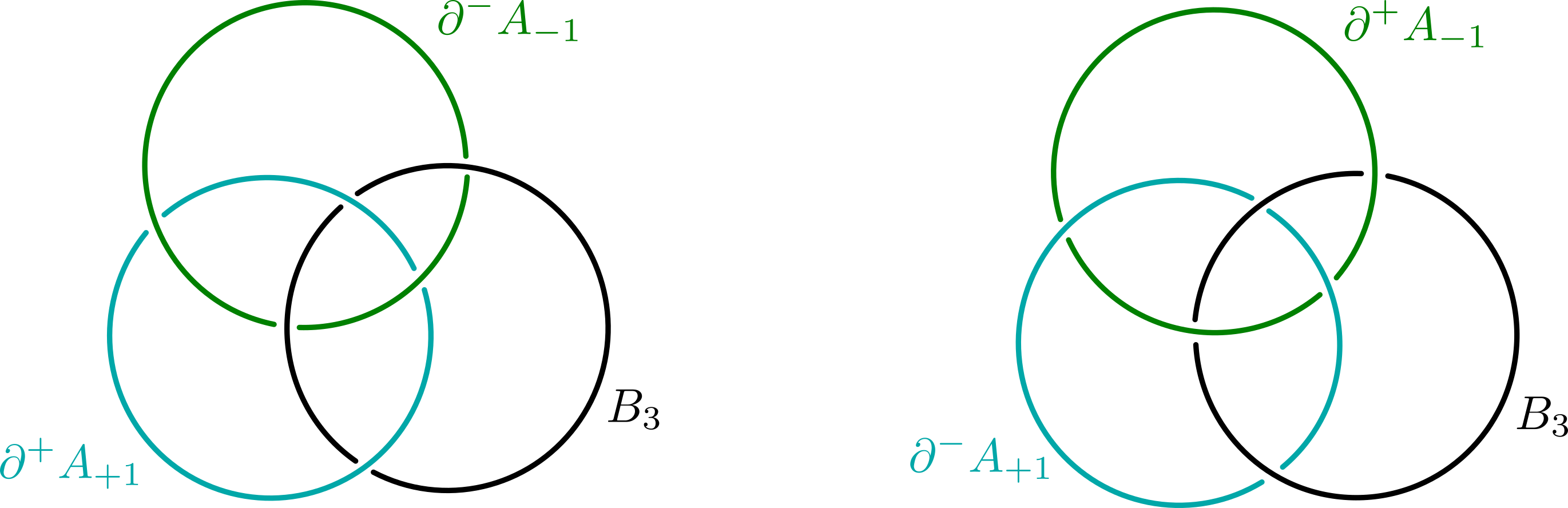}
    \caption{Correctly oriented $\partial^+ A_{+1}$, $\partial^- A_{-1}$ in $\{+1\}\times P$ and $\partial^- A_{+1}$, $\partial^+ A_{-1}$ in $\{-1\}\times P$ after cancellation.}
    \label{fig: L in P (oriented)}
\end{figure}

In the standard 0-surgery unknot diagram of $S^2\times S^1$, we identify the complement of the 0-surgery unknot with $\{-1\}\times P$ (correctly oriented), and we identify the unknot with a circle on the boundary torus in $\{+1\}\times P$ parallel to the canonical longitude of $B_3$. Then, under this identification, $L$ appears in the surgery diagram for $P\cup_{\text{id}_{\partial P}}\overline{P}=S^2\times S^1$ as in Figure \ref{fig: L in S1 X S2}. (A more detailed explanation of how Figure \ref{fig: L in S1 X S2} is obtained is given in Appendix \ref{apend: a}.)

\begin{figure}[h]
    \centering
    \includegraphics[scale=0.5]{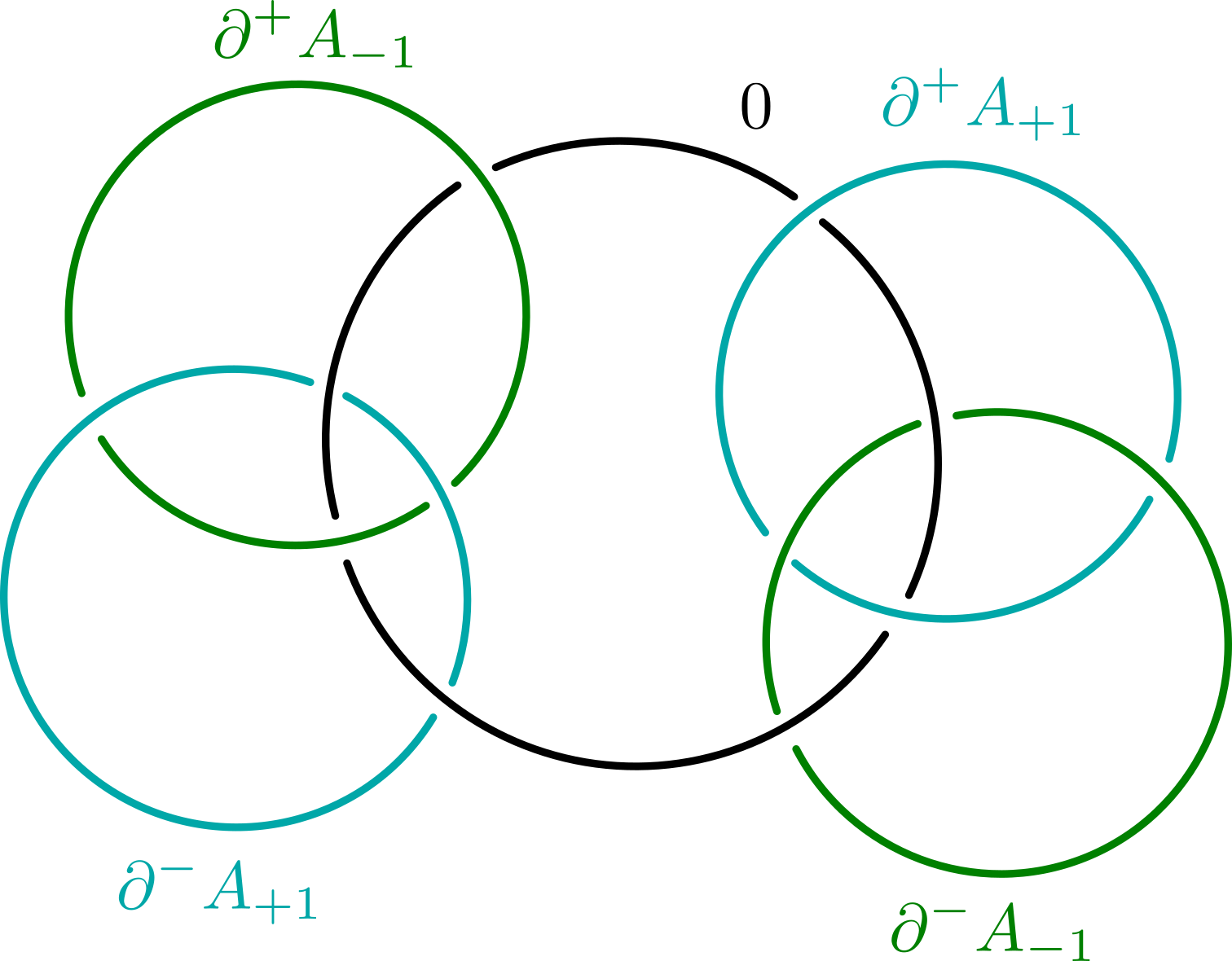}
    \caption{The link of intersection $L$ in the 0-surgery unknot diagram for $S^2\times S^1$ under suitable identification.}
    \label{fig: L in S1 X S2}
\end{figure}
Finally, since $\C$ is obtained from $I\times P$ by attaching the $(-1)$-framed 2-handle $h_0$ along $\{+1\}\times P$, corresponding to a meridian of $B_3$ in Figure \ref{fig: 12}, we obtain a diagram, Figure \ref{fig: L in B4}, of $L$ inside $\partial \C\cong S^3$. In particular, since the boundary diffeomorphism $f$ is obtained from crossing Dehn twist on $\partial \Sigma\times I$ with $S^1$, where $\partial \Sigma$ is identified with a canonical longitude of $B_3$ in Figure \ref{fig: 12}, $f$ is a twist along the canonical longitude of the boundary torus of a regular neighborhood of the 0-framed circle in Figure \ref{fig: L in B4}.

\begin{figure}[h]
    \centering
    \includegraphics[scale=0.5]{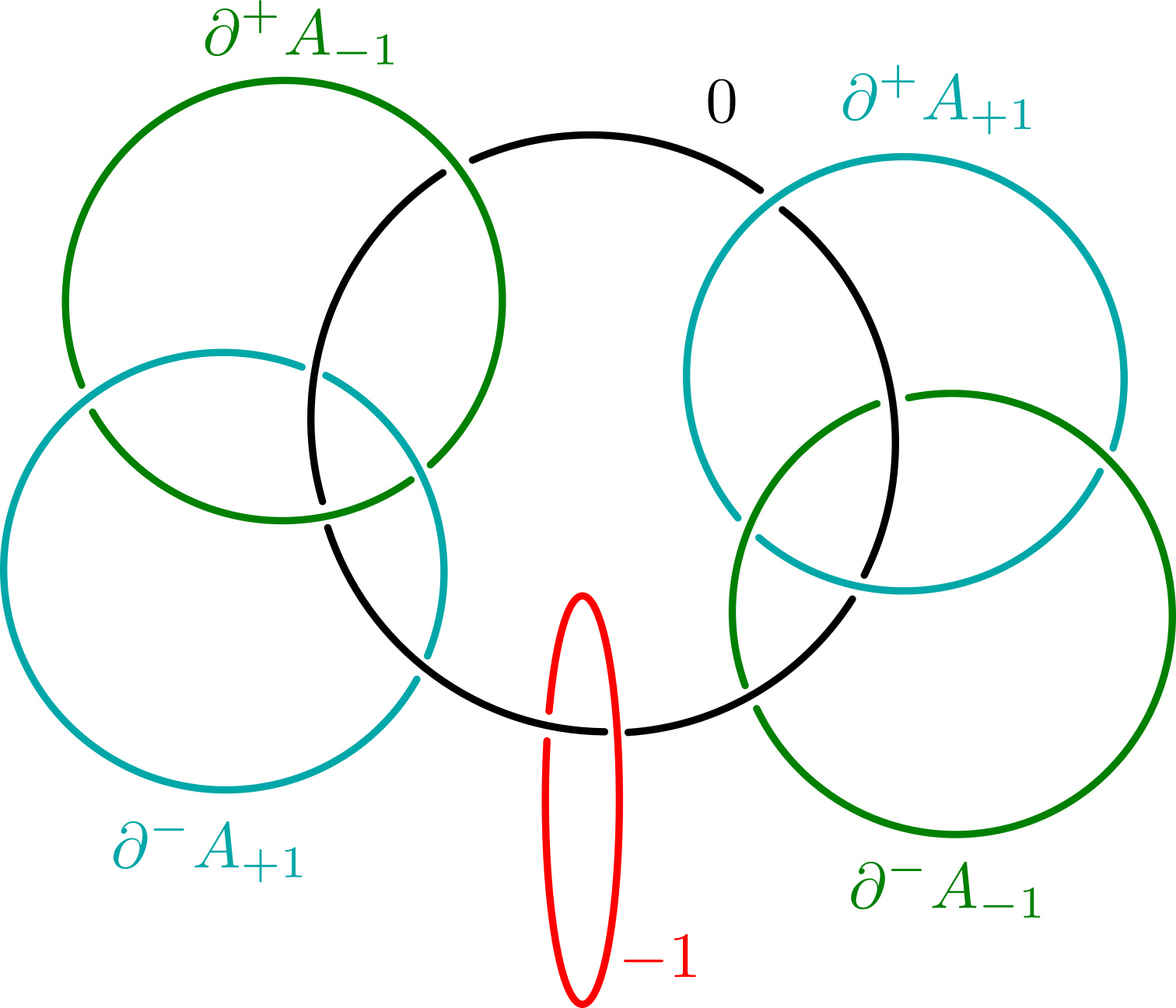}
    \caption{The link of intersection $L$ in a surgery diagram for $\partial \C\cong S^3$ under suitable identification.}
    \label{fig: L in B4}
\end{figure}

After first performing a blowdown and then applying the isotopies shown in Figure \ref{fig: Simplify 4}, we obtain a diagram of $L$ in the standard $S^3$ as in Figure \ref{fig: L in S3 final}.
\begin{figure}[h]
    \centering
    \includegraphics[scale=0.5]{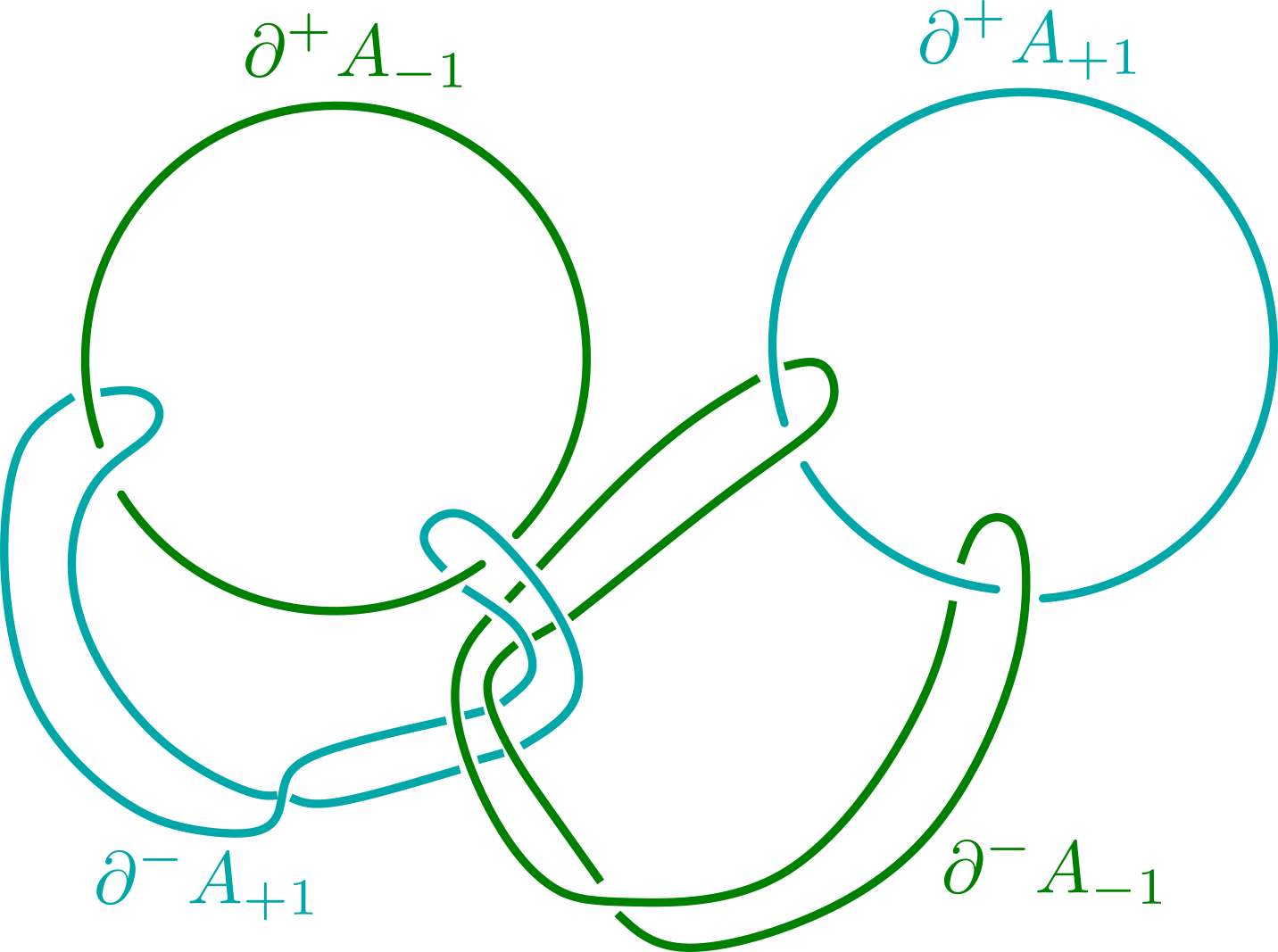}
    \caption{The link of intersection $L$ in standard $S^3$.}
    \label{fig: L in S3 final}
\end{figure}

\begin{figure}[h]
    \centering
    \includegraphics[scale=0.43]{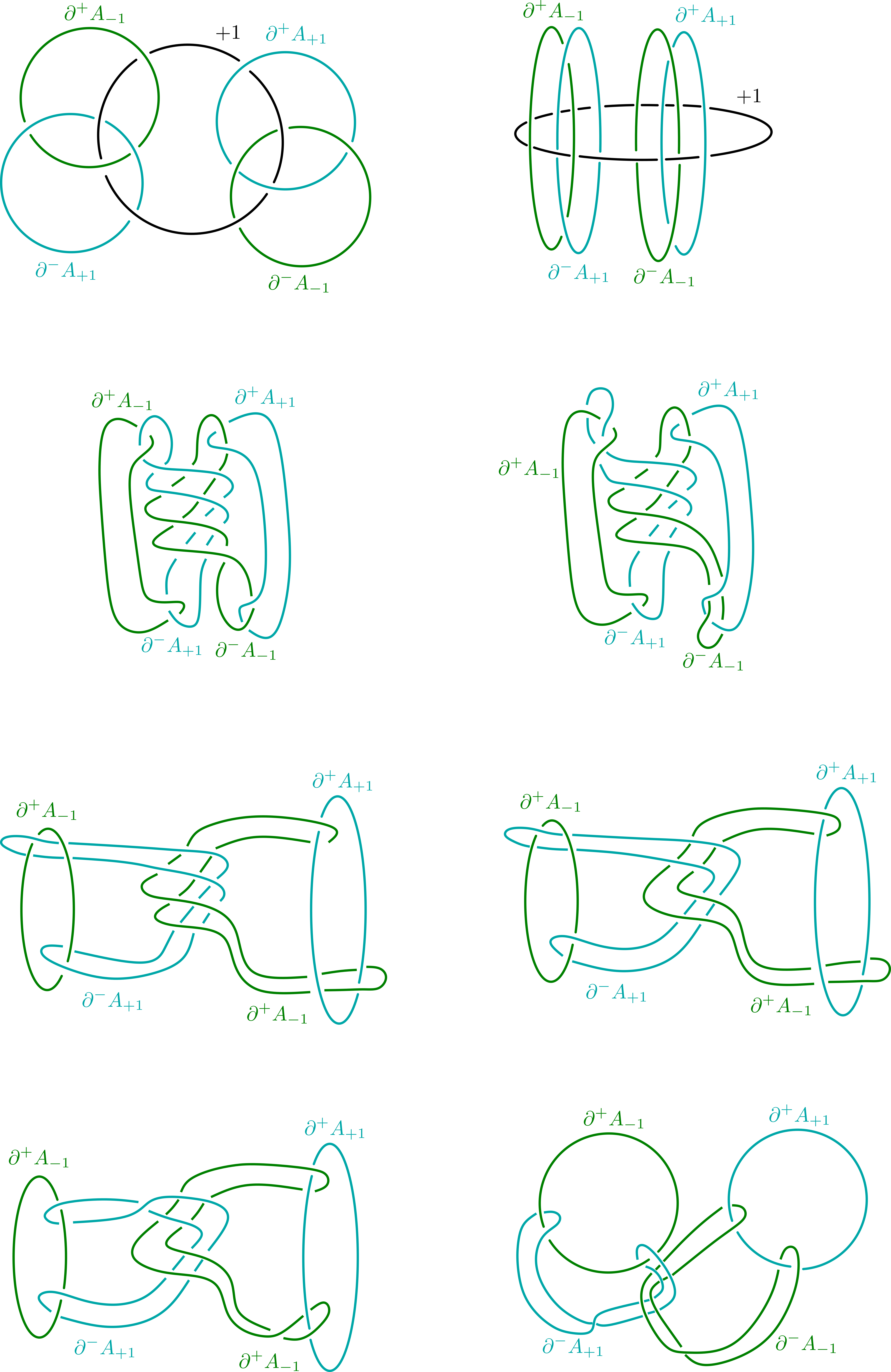}
    \caption{Figure \ref{fig: L in S3 final} is obtained from Figure \ref{fig: L in B4} by performing a blowdown and then applying isotopies.}
    \label{fig: Simplify 4}
\end{figure}

\section{Further Results and Questions}\label{sec:further}

We conclude by discussing the different notions of surface corks, obtaining further result concerning exterior surface corks, and raising several further questions. 

In the setting of simply-connected, closed $4$-manifolds, the \hyperref[corkthm]{Involutory Cork Theorem} asserts that any two exotic manifolds are related by a cork twist. It is therefore natural to ask whether an analogue of this theorem holds for any of the three types of surface corks for pairs $(X, F)$. 

\begin{question}
    Given an exotic pair $(X, F_1)$ and $(X, F_2)$ of smoothly embedded, closed surfaces in a closed, simply-connected 4-manifold, are they always related by a surface cork twist via an enclosing/exterior/transverse surface cork?
\end{question}

We first note that the existence of an enclosing surface cork for a pair $(X,F)$ forces $F$ to be null-homologous. Thus, one should not expect an enclosing surface cork to relate exotic embeddings of surfaces in general. 

The situation for exterior surface corks is more flexible, where the definition does not impose the same homological restriction on $F$. In fact, as we show in Theorem~\ref{thm: ext cork} and Corollary~\ref{cor: ext cork exists}, known examples in the literature provide exotic pairs $(X,F_1)$ and $(X,F_2)$ related by an exterior surface cork twist of order 2, in which $F_1, F_2\subset X$ are homologically essential. This proves Theorem \ref{thm: extthm} in Section \ref{sec: intro}, which we now restate for the reader's convenience.

\begin{maintheoremII}
    There exists a smooth, simply-connected, closed $4$-manifold $X$ containing a smoothly embedded surface $F$ such that the pair $(X,F)$ admits an exterior surface $\mathbb{Z}_2$-cork.
\end{maintheoremII}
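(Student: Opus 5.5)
Our plan is to derive Theorem 1.1 from the example of Auckly--Kim--Melvin--Ruberman in \cite{auckly2015stable}, combined with a relative version of the Involutory Cork Theorem (in the form of \cite{MelvinSchwartz, key1374205m, matveyev1996decomposition}). Recall the setup. There is a cork $(C,\tau)$ with $\tau^2=\operatorname{id}$ whose blow-up $C^+=C\#\cp$ embeds in a simply-connected closed $X$. The exceptional sphere $F\subset C^+$ has self-intersection $+1$. The pairs $(X,F)$ and $(X',F):=\bigl((X-\Int C^+)\cup_\tau C^+,F\bigr)$ are homeomorphic but not diffeomorphic. Moreover, $X'\cong X$, since they also show the twisted manifold is diffeomorphic to $X$ (or one arranges this). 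This already looks like a surface cork, except that $C^+$ contains $F$ and is not contractible. The idea is to trade this twist for a twist along a contractible piece disjoint from $F$.

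\textbf{Step 1: rewrite the twist as a twist on the complement.} Set $W=X-\Int \nu(F)$. Then $(X,F)$ and $(X',F)$ are determined by $W$ and by $W'=(W-\Int(C^+-\nu(F)))\cup_\tau (C^+-\nu(F))$, each glued to $\nu(F)$ along $\partial\nu(F)=S^3$. Since $F\cdot F=1$, $\nu(F)$ is a punctured $\cp$, so $W$ and $W'$ are compact, simply-connected $4$-manifolds with boundary $S^3$. Concretely, $C^+-\nu(F)\cong C-B^4$, so the regluing is the cork twist on $C$ performed inside $W\cup B^4$. The key point is that a pairwise diffeomorphism $(X,F)\cong(X',F)$ gives a diffeomorphism $W\cong W'$; this uses uniqueness of tubular neighborhoods together with the fact that every diffeomorphism of $S^3$ extends over $B^4$. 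Conversely, a diffeomorphism $W\cong W'$ extends over $\nu(F)$, after possibly composing with a reflection. Hence $W$ and $W'$ are homeomorphic but not diffeomorphic, and they are exotic rel boundary.

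\textbf{Step 2: apply the relative cork theorem.} Applying the relative version of the Involutory Cork Theorem to the simply-connected pair $W,W'$ (homeomorphic rel boundary, via the h-cobordism argument of \cite{key1374205m, matveyev1996decomposition}), we obtain a contractible $C'\subset\Int W$ and an involution $\tau'$ of $\partial C'$ with $W'\cong W(C',\tau')$ rel boundary. Since $C'\subset\Int W$ is disjoint from $F$, gluing $\nu(F)$ back shows that $(C',\tau')$ satisfies condition (II). Moreover,
\[(X_{\tau'},F)\cong(X',F),\]
which is exotic to $(X,F)$, and $X_{\tau'}\cong X'\cong X$. This makes $(C',\tau')$ an exterior surface cork. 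Because $\tau'^2=\operatorname{id}$, the group $\{\operatorname{id},\tau'\}\cong\Z_2$ is a subgroup of $\operatorname{Diff}(\partial C')$ whose only nontrivial element is a surface cork, so $(C',\Z_2)$ is an exterior surface $\Z_2$-cork. This is consistent with Remark~\ref{rmk: Extendible}: $\tau'$ cannot extend over $C'$.

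\textbf{Main obstacle.} The delicate step is establishing the relative cork theorem in the needed form. We need the h-cobordism between $W$ and $W'$ rel $\partial$, which requires the homeomorphism to be rel boundary and to respect intersection forms. We then need the Matveyev/CFHS handle-trading argument to run inside a compact manifold with boundary while keeping the contractible piece in the interior. I would first build the h-cobordism from $W\times I$ by using the boundary-fixing homeomorphism, as in Freedman's theorem for simply-connected manifolds with $S^3$ boundary. I would then invoke the \cite{MelvinSchwartz} version, which allows boundary, and check that the resulting cork lies away from $\partial W$.
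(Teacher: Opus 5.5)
Your argument is correct and is essentially the paper's proof. You take the Auckly--Kim--Melvin--Ruberman exotic $(+1)$-spheres and pass to the complement of a tubular neighborhood, which is a simply-connected manifold with $S^3$ boundary; you then apply the Melvin--Schwartz Relative Involutory Cork Theorem with the boundary identification and glue $\nu(F)$ back to obtain the pairwise diffeomorphism $(X_{\tau'},F)\cong(X',F)$.

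The paper phrases this with two spheres $F_1,F_2$ in the same $X$ and extends the boundary restriction of a diffeomorphism $\phi:\nu(F_1)\to\nu(F_2)$. It simply cites Melvin--Schwartz, which needs only homotopy-equivalent pieces with homology-sphere boundary, so your ``main obstacle'' paragraph is just that citation. The converse/reflection remark in your Step 1 is never used and can be dropped.
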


The main ingredient in establishing this result for exterior surface corks is a relative version of the Cork Theorem stated below.

\begin{relcorktheorem}[Theorem~1.16 in \cite{MelvinSchwartz}]
\label{thm:relative_involutory_cork}
If $X_1$ and $X_2$ are homotopy equivalent, smooth, compact
simply-connected $4$-manifolds with diffeomorphic homology sphere
boundaries, then any diffeomorphism
$f:\partial X_1 \to \partial X_2$ extends to a diffeomorphism
$(X_1)_{\tau} \to X_2$ for some tight simple involutory cork $(C,\tau)$ in $X_1$.
\end{relcorktheorem}

For the definitions of the terms \textit{simple} and \textit{tight} appearing in the statement, we refer the reader to \cite[Definitions 1.3 and 1.8]{MelvinSchwartz}. We will not use these notions in the rest of the paper.

The Relative Involutory Cork Theorem naturally leads to the existence of exterior surface corks under favorable conditions.

\begin{theorem} \label{thm: ext cork}
    Let $F_1$ and $F_2$ be a pair of smoothly embedded 2-spheres in a smooth, closed 4-manifold $X$ such that:
    \begin{enumerate}
        \item both $F_1$ and $F_2$ are embedded with simply-connected complements;
    \item the self-intersection of $F_1$ and $F_2$ are both equal to $+1$ or both equal to $-1$; and
    \item  $(X,F_1)$ and $(X,F_2)$ form an exotic pair.
    \end{enumerate}
    Then, there exists an exterior surface cork $(C, \tau)$ for some $C\subset X-\nu (F_1)$, such that 
    \[(X_\tau, F_1)\cong (X, F_2).\]
\end{theorem}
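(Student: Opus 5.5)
We will apply the Relative Involutory Cork Theorem to the two surface complements. Set $X_i := X - \operatorname{int}\nu(F_i)$ for $i=1,2$. Since $F_i$ is a sphere with self-intersection $\pm 1$, the closed tubular neighborhood $\nu(F_i)$ is a disk bundle of Euler number $\pm 1$ over $S^2$, namely a punctured $\pm\mathbb{CP}^2$. Its boundary is therefore $S^3$, which is a homology sphere. Both $X_1$ and $X_2$ are compact and smooth, and by hypothesis (1) they are simply connected.

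The plan has the following steps.

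\emph{Step 1: a pairwise homeomorphism restricts to the complements.} By hypothesis (3) there is a pairwise homeomorphism $h:(X,F_1)\to(X,F_2)$. After a topological isotopy we may assume $h$ carries $\nu(F_1)$ onto $\nu(F_2)$; this uses uniqueness of topological normal bundles and the fact that the Euler numbers agree, which is hypothesis (2). Then $h$ restricts to a homeomorphism $X_1\to X_2$, so in particular $X_1$ and $X_2$ are homotopy equivalent.

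\emph{Step 2: a smooth identification of the boundaries.} Choose a diffeomorphism $\phi:\nu(F_1)\to\nu(F_2)$ of disk bundles carrying $F_1$ to $F_2$. Such a $\phi$ exists because both are the $D^2$-bundle over $S^2$ with the same Euler number. Let $g := \phi|_{\partial}:\partial X_1\to\partial X_2$, a diffeomorphism of $S^3$'s. Orientations need care here: $h$ and $\phi$ should be chosen orientation-compatibly so that $g$ is orientation preserving for the induced boundary orientations. If necessary, precompose $\phi$ with a bundle automorphism, such as fiberwise conjugation combined with a reflection of $S^2$, so that the orientation matches what $h$ does on $X_1$.

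\emph{Step 3: apply the cork theorem and reglue.} The Relative Involutory Cork Theorem now gives an involutory cork $(C,\tau)$ with $C\subset X_1 = X-\operatorname{int}\nu(F_1)$, together with a diffeomorphism $G:(X_1)_\tau\to X_2$ extending $g$. Since $C$ lies in $X_1$, shrinking slightly if needed, we have $C\cap F_1=\varnothing$. Gluing gives
\[
G\cup\phi:\ (X_1)_\tau\cup_{\partial}\nu(F_1)=X_\tau \longrightarrow X_2\cup_\partial \nu(F_2)=X,
\]
which is a diffeomorphism carrying $F_1$ to $F_2$. Hence $(X_\tau,F_1)\cong(X,F_2)$. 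The same map shows $X_\tau\cong X$.

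\emph{Step 4: verify the surface cork conditions.} From Step 3, $(X_\tau,F_1)$ is diffeomorphic to $(X,F_2)$, and by hypothesis (3) this pair is homeomorphic but not diffeomorphic to $(X,F_1)$. So condition (2) of Definition~\ref{def: surface cork} holds, and since $C\cap F_1=\varnothing$, $(C,\tau)$ is an exterior surface cork.

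The main obstacle is Step 1 together with the orientation bookkeeping in Step 2. We must make sure the pairwise homeomorphism can be arranged to respect tubular neighborhoods, which requires topological normal bundle uniqueness (Freedman–Quinn). We must also make sure $g$ meets the theorem's hypothesis that it be an orientation-preserving diffeomorphism between the oriented homology sphere boundaries. Since every orientation-preserving diffeomorphism of $S^3$ is isotopic to the identity (Hatcher's theorem, used in Corollary~\ref{cor: strong}), the precise choice of $g$ otherwise does not matter.
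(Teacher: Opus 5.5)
Your proposal is correct and follows essentially the same route as the paper: take a diffeomorphism $\phi:\nu(F_1)\to\nu(F_2)$ with $\phi(F_1)=F_2$, apply the Relative Involutory Cork Theorem to $\phi|_{\partial}$ on the complements $X_1,X_2$ (whose boundaries are $S^3$), and glue the resulting diffeomorphism $(X_1)_\tau\to X_2$ to $\phi$ to obtain $(X_\tau,F_1)\cong(X,F_2)$. Your Step 1 and the orientation discussion only fill in details the paper leaves implicit, and the orientation check is automatic: hypothesis (2) forces any pairwise homeomorphism to be orientation-preserving, because it multiplies $F_1\cdot F_1=\pm1$ by its degree, so it suffices to take $\phi$ orientation-preserving.
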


\begin{proof}
    Since both $F_1$ and $F_2$ are embedded 2-spheres having self-intersection $+1$ (resp. $-1$), there is some diffeomorphism $\phi:\nu(F_1)\to \nu (F_2)$ with $\phi(F_1)=F_2$. 

    Consider the complements $X_1:=X-\nu(F_1)$ and $X_2:=X-\nu(F_2)$. 
    
    Since the pairs $(X, F_1)$ and $(X, F_2)$ are homeomorphic and $F_1$ and $F_2$ are embedded with simply-connected complements, $X_1$ and $X_2$ are homotopy equivalent, smooth, compact, simply-connected $4$-manifolds with diffeomorphic homology sphere boundaries. In fact, $\partial X_1=\partial (\nu(F_1))$ and $\partial X_2=\partial (\nu(F_2))$ are both diffeomorphic to $S^3$. Then, by the Relative Involutory Cork Theorem, the diffeomorphism $\phi\mid_{\partial X_1}: \partial X_1\to \partial X_2$ extends to a diffeomorphism $\Phi: (X_1)_{\tau}\to X_2$ for some involutory cork $(C, \tau)$, i.e. $C\subset \operatorname{int} (X_1)$ is a compact, contractible, codimension-zero submanifold with a boundary diffeomorphism $\tau$ such that $\tau^2=\operatorname{id}$. 

    This induces a diffeomorphism \[ \Phi\cup \phi: (X_1)_{\tau}\cup_{\operatorname{id} }\nu(F_1) \to X_2\cup_{\operatorname{id}} \nu(F_2)\] defined by \[
\Phi\cup \phi(x)=\begin{cases}
    \Phi(x) &\text{if $x\in (X_1)_{\tau}$},\\
    \phi(x) &\text{if $x\in \nu(F_1)$}.
\end{cases}
\]
This map is well defined, since $\Phi\mid_{\partial X_1}=\phi\mid_{\partial X_1}$.

Since \[
\begin{aligned}
    (X_1)_{\tau}\cup_{\operatorname{id} }\nu(F_1) = X_{\tau} \qquad \text{and} \qquad
    \phi(F_1) = F_2,
\end{aligned}
\] we actually have a pairwise diffeomorphism \[\Phi\cup \phi:(X_{\tau}, F_1)\to (X, F_2).\]
That is, $(X, F_2)$ is obtained from $(X, F_1)$ via a twist by the exterior surface cork $(C, \tau)$. Since $\tau^2=\operatorname{id}$, the surface cork order of $(C, \tau)$ is 2.
\end{proof}

\begin{corollary}\label{cor: ext cork exists}
    There is an exterior surface $\Z_2$-cork $(C, f, j)$ for some pair $(X, F)$ of smoothly embedded 2-sphere $F$ in some simply connected closed 4-manifold $X$. 

    In fact, infinitely many such smooth pairs admit exterior surface $\Z_2-$corks.
\end{corollary}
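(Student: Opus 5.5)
The plan is to feed the examples of Auckly--Kim--Melvin--Ruberman \cite{auckly2015stable}, recalled in the introduction, into Theorem~\ref{thm: ext cork}. Those authors take a cork $(C_0,\tau_0)$ with an involution on its boundary and form the blow-up $C^+=C_0\#\cp$, naturally identifying $\partial C^+$ with $\partial C_0$. They then embed $C^+$ in infinitely many simply-connected closed 4-manifolds $X$, and let $F\subset C^+$ be the exceptional sphere, which has self-intersection $+1$. Put $F_1:=F\subset X$, and let $F_2$ be the image of $F$ in $X'=(X-\operatorname{int}C^+)\cup_{\tau_0}C^+$. By \cite{auckly2015stable}, $(X,F_1)$ and $(X',F_2)$ form an exotic pair.

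First, I would identify $X'$ with $X$ so that $F_2$ becomes a sphere in $X$. For this I would quote from \cite{auckly2015stable} that the twist on $C^+$ does not change the ambient manifold. The reason is that blowing up makes the cork twist trivial: $\tau_0$ extends over $C_0\#\cp$ because of the extra $\cp$ summand. Pulling $F_2$ back along a diffeomorphism $X'\cong X$ gives two $+1$-spheres $F_1,F_2\subset X$, and the pairs $(X,F_1)$ and $(X,F_2)$ are homeomorphic but not diffeomorphic. This gives hypotheses (2) and (3) of Theorem~\ref{thm: ext cork}.

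Next, I would check hypothesis (1), that the complements are simply-connected. Since $F$ has odd self-intersection $+1$, a meridian of $F$ bounds a disk punctured once by $F$. The complement of $\nu(F)$ in $C^+$ is $C_0$ minus a ball, which is simply-connected. Hence $X-\nu(F_1)=(X-\operatorname{int}C^+)\cup(C^+-\nu(F))$ is simply-connected by van Kampen, provided $X-\operatorname{int}C^+$ is. If that last point fails in general, I would restrict to the specific embeddings in \cite{auckly2015stable}, or argue directly that the meridian of $F$ is nullhomotopic there. Because the gluing by $\tau_0$ is along $\partial C^+$, the same computation applies to $F_2$.

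Theorem~\ref{thm: ext cork} then produces an involutory cork $(C,\tau)$ with $C\subset X-\nu(F_1)$ and $(X_\tau,F_1)\cong(X,F_2)$. I would then check that this is an exterior surface cork in the sense of Definition~\ref{def: surface cork}. The sphere $F_1$ misses $C$. We have $X_\tau\cong X$, since forgetting the surface in the pair diffeomorphism gives $X_\tau\cong X$. Finally $(X_\tau,F_1)\cong(X,F_2)$, which is homeomorphic but not diffeomorphic to $(X,F_1)$. Taking $G=\{\operatorname{id},\tau\}\cong\Z_2$ gives a surface $\Z_2$-cork. Running this over the infinitely many $X$ from \cite{auckly2015stable} gives infinitely many such pairs. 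If the reading of ``infinitely many'' requires the pairs $(X,F)$ to be distinct, it suffices that the ambient manifolds $X$ are pairwise non-diffeomorphic.

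The main obstacle is the first step: making sure that \cite{auckly2015stable} really gives the ambient diffeomorphism $X'\cong X$ and primitive embeddings, so that $F_2$ can be regarded as a sphere in the same $X$. If that reference does not supply primitivity directly, I would go back to the van Kampen argument above, using the $+1$ self-intersection and the contractibility of $C_0$.
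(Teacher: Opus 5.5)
Your proposal is correct and follows the paper's route: take the exotic $+1$-spheres of \cite{auckly2015stable} (the paper cites their Theorem~A for $p\cp\#q\overline{\cp}$), observe that they are primitively embedded, and apply Theorem~\ref{thm: ext cork}; the paper simply asserts primitivity, whereas you try to justify it. Your proviso that $X-\operatorname{int}C^+$ be simply-connected is unnecessary: since $C^+$ and $C^+-\nu(F)$ are both simply-connected, van Kampen gives $\pi_1(X-\nu(F))\cong\pi_1(X)=1$ directly (alternatively, $\pi_1(X-\nu(F))$ is normally generated by the meridian, which bounds a disk in $\partial\nu(F)\cong S^3$).
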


\begin{proof}
    By Theorem $A$ of \cite{auckly2015stable}, if $p \geq 4$ is even and $q \geq 5p$, then $\mathbb{X}_{p,q}:=p\cp\# q\overline{\cp}$ contains an infinite family $\{F_k\}_{k \geq 0}$ of topologically isotopic embedded $2$-spheres of self-intersection $+1$ that are smoothly inequivalent, i.e. $(\mathbb{X}_{p,q}, S_{k_1})$ and $(\mathbb{X}_{p,q}, S_{k_2})$ form an exotic pair for any $k_1\neq k_2$. 
    
    For $q \geq 9$, $\mathbb{X}_{2,q}$ contains a pair $F_1,F_2$ of topologically isotopic embedded $2$-spheres of self-intersection $+1$ that are smoothly inequivalent, i.e. $(\mathbb{X}_{p,q}, F_1)$ and $(\mathbb{X}_{p,q}, F_2)$ form an exotic pair.

    Furthermore, in all of the examples mentioned above, the 2-sphere $F_k\subset \mathbb{X}_{p,q}$ is primitively embedded, i.e. embedded with simply-connected complement. 

    Thus, by Theorem \ref{thm: ext cork}, each of these exotic pairs is related by an exterior surface $\Z_2$-cork, exhibited as a submanifold of the ambient manifold.
\end{proof}

 As discussed in Remark \ref{rmk: Extendible}, the exterior surface $\Z_2$-corks in Corollary \ref{cor: ext cork exists} are not ambiently extendible. We also note that these examples of exotic 2-spheres from \cite{auckly2015stable} were created from effective cork twists and were distinguished by the exotic smooth structures of the $4$-manifolds obtained after blowdown, and Corollary \ref{cor: ext cork exists} shows that the exoticness can already be localized inside the original ambient 4-manifold.

It is natural to ask whether our result concerning exterior surface corks for primitively embedded $\pm 1$ 2-spheres extends to a wider class of embedded surfaces.

\begin{question}
    Given an exotic pair $(X, F_1)$ and $(X, F_2)$ of smoothly, primitively embedded, closed surfaces $F_1, F_2$ in a closed, simply-connected 4-manifold $X$, are they always related by an exterior surface cork twist?
\end{question}

This question would have an affirmative answer if the \hyperref[thm:relative_involutory_cork]{Relative Involutory Cork Theorem} remained true without the homology sphere boundaries hypothesis. However, this is not the case in general; see, for instance, \cite[Remark 1.16]{MelvinSchwartz}.

Finally, our results concerning annular surface corks diffeomorphic to $B^4$ from Section~\ref{sec: construct transverse}--\ref{sec: Link} provide some evidence that transverse surface corks may offer a useful perspective on how exotic pairs of smoothly embedded surfaces are related. We thus close with the following questions.

\begin{question}
    \begin{enumerate}
        \item Given an exotic pair $(X, F_1)$ and $(X, F_2)$ of smoothly, primitively embedded, closed surfaces $F_1, F_2$ in a closed, simply-connected 4-manifold $X$, are they always related by a transverse surface cork twist? Or, even, by a planar surface cork twist?

        \item Given an exotic pair $(X, F_1)$ and $(X, F_2)$ of smoothly, primitively embedded, closed surfaces $F_1, F_2$ in a closed, simply-connected 4-manifold $X$, are they always related by a twist via a transverse surface cork $(C, f, j)$ with $C\cong B^4$?
    \end{enumerate}
\end{question}

\clearpage
\appendix
\section{Details for Figure~\ref{fig: L in S1 X S2}} \label{apend: a} 
In the standard 0-surgery unknot diagram of $S^2\times S^1$, we identify the complement of the 0-surgery unknot with $\{-1\}\times P$ (correctly oriented). Then, we may isotope the two link components $\partial^+ A_{+1}$ and $\partial^- A_{-1}$ in $\{+1\}\times P$ and view it inside the solid torus complementary to $\nu(B_3)$ in $S^3.$ The core of this solid torus is exactly a meridian of $B_3$. 

\begin{figure}[h]
    \centering
    \includegraphics[scale=0.45]{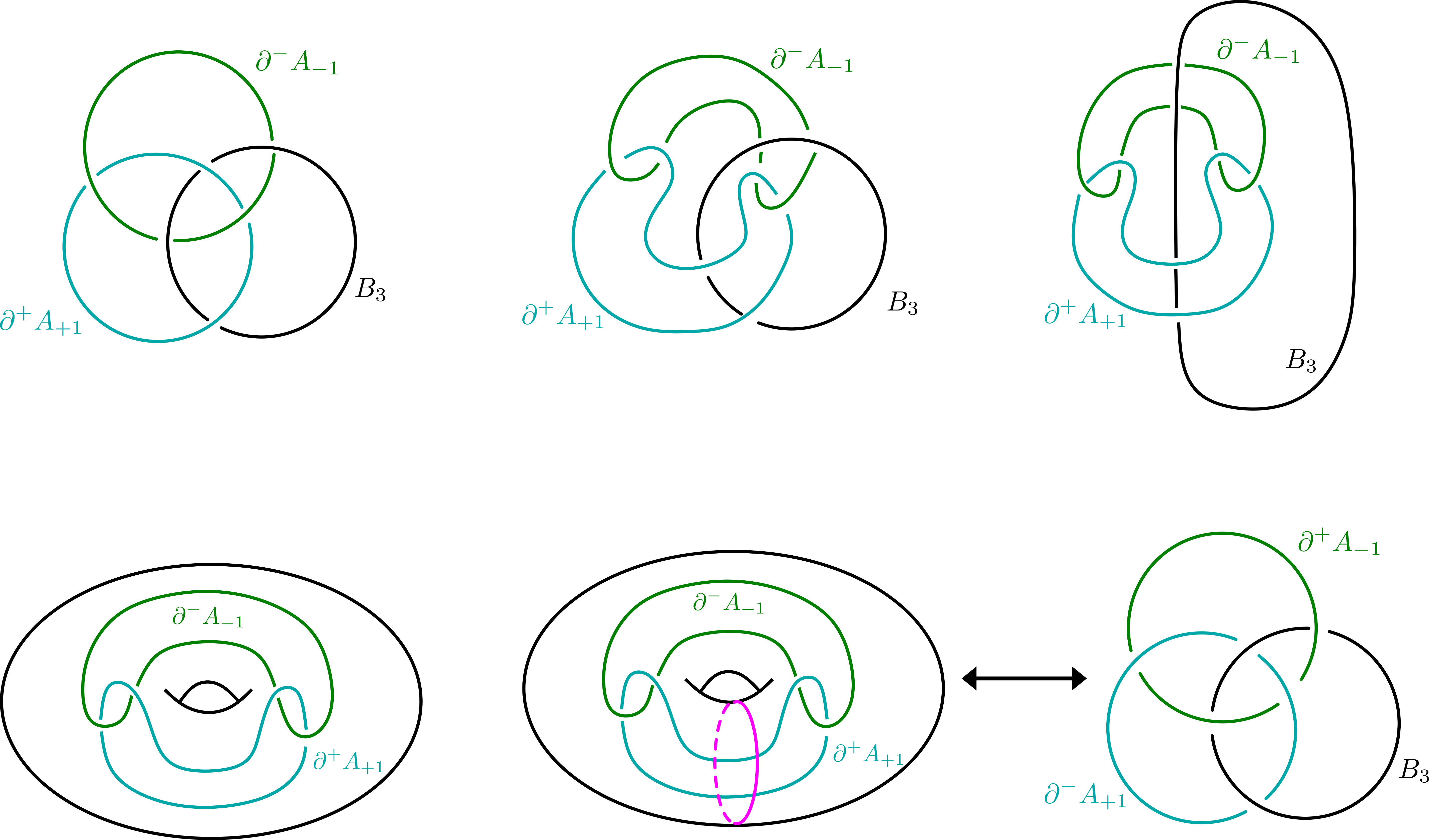}
    \caption{The two link components $\partial^+ A_{+1}$ and $\partial^- A_{-1}$ isotoped in $\{+1\}\times P$ and viewed inside the solid torus complementary to $\nu(B_3)$ in $S^3.$}
    \label{fig: Appendix 1-1}
\end{figure}

Then, we glue the boundary of this solid torus to $\{-1\}\times P$ along the boundary tori via the identity. That is, we identify the unknot with a circle (the pink circle in Figure \ref{fig: Appendix 1-1}) on the boundary torus in $\{1\}\times P$ parallel to the canonical longitude of $B_3$. Then, under this identification, $L$ appears in the surgery diagram for $P\cup_{\text{id}_{\partial P}}\overline{P}=S^2\times S^1$ as in Figure \ref{fig: L in S1 X S2}.
\begin{figure}[h]
    \centering
    \includegraphics[scale=0.45]{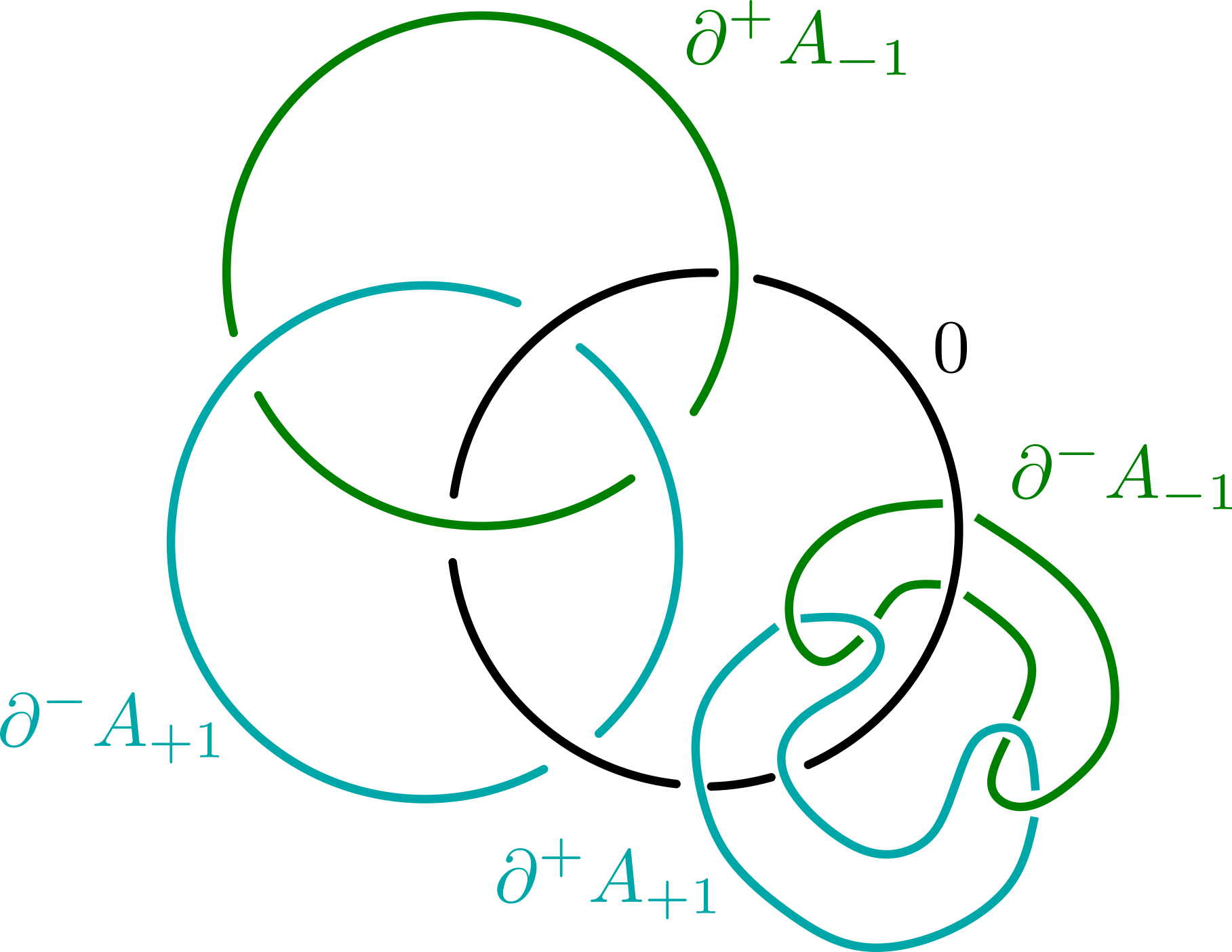}
    \caption{$L$ in the surgery diagram for $P\cup_{\text{id}_{\partial P}}\overline{P}=S^2\times S^1$ after the gluing.}
    \label{fig: Appendix 1-2}
\end{figure}

\clearpage
\printbibliography

\end{document}